\documentclass[a4paper, 12pt, oneside]{amsart}
\usepackage{amsmath,amscd}
\usepackage{amssymb}
\usepackage{amsthm}
\usepackage{comment}
\usepackage{graphicx, xcolor}
\usepackage[abbrev,nobysame,alphabetic]{amsrefs}
\usepackage{mathrsfs}
\usepackage[ocgcolorlinks, linkcolor=blue]{hyperref}

\usepackage{bm}
\usepackage{bbm}
\usepackage{url}

\usepackage[utf8]{inputenc}
\usepackage{mathtools,amssymb}
\usepackage{esint}
\usepackage{tikz}
\usepackage{dsfont}
\usepackage{relsize}
\usepackage{url}
\usepackage{xcolor}
\usepackage{graphicx}
\usepackage{mathrsfs}
\usepackage[shortlabels]{enumitem}
\usepackage{lineno}
\usepackage{amsmath}
\usepackage{enumitem}
\usepackage{amsthm} 
\usepackage{verbatim}
\usepackage{dsfont}
\numberwithin{equation}{section}

\allowdisplaybreaks

\graphicspath{{images/}}

\newtheorem{thm}{Theorem}[section]

\newtheorem{lem}[thm]{Lemma}
\newtheorem{prop}[thm]{Proposition}

\theoremstyle{definition}
\newtheorem{rem}[thm]{Remark}

\newtheorem*{claim*}{Claim}
\theoremstyle{remark}

\numberwithin{equation}{section}

\newcommand{\R}{{\mathbb R}}

\newcommand{\LC}{\left(}
\newcommand{\RC}{\right)}

\DeclareMathOperator{\supp}{supp} 

\newcommand{\dif}[1]{\,\mathrm{d}{#1}} 

\title[Non-linear fractional inverse scattering]{On a nonlocal non-linear inverse scattering problem}

\author{Saumyajit Das}
\address{Harish-Chandra Research Institute, Homi Bhabha National Institute, Chhatnag Road, Jhunsi, Prayagraj (Allahabad) 211 019, India}
\email{saumyajit.math.das@gmail.com}

\author{Susovan Pramanik}
\address{Harish-Chandra Research Institute, Homi Bhabha National Institute, Chhatnag Road, Jhunsi, Prayagraj (Allahabad) 211 019, India}
\email{susovanpramanik@hri.res.in}

\keywords{inverse scattering problems, nonlocal fractional Helmholtz equation, nonlocal Laplacian, fractional resolvent estimate}

\begin{document}

\maketitle
\begin{abstract}
In this article, we study the inverse scattering problem for the nonlinear fractional Helmholtz equation with cubic nonlinearity in three dimensions, where we recover a compactly supported potential from scattering amplitude.
\end{abstract}

\section{Introduction}

The inverse scattering problem concerns the recovery of unknown potentials or media from scattering amplitude measurements, with applications in quantum optics for modeling nonlocal photon-atom interactions and geophysical electromagnetics for anomalous responses in self-similar formations. Physically, these arise in wave propagation through heterogeneous media exhibiting nonlocal and nonlinear effects, such as fractional elasticity models. For classical background, we refer readers to \cite{ColtonKress1998,kraisler2022collective,kraisler2023kinetic}. Beyond physics, inverse scattering exhibits mathematical richness. For detailed study, we refer to \cite{eskin2011lectures,cakoni2006qualitative,colton1998inverse,isozaki2013recent,uhlmann2000inverse}. 

In recent years, studies have examined the nonlinear Helmholtz equation in the scattering of electromagnetic waves through localized nonlinear Kerr media \cite{baruch2009high,fibich2005numerical,wu2018finite}. Here, the nonlinear Helmholtz equation arises from reducing Maxwell's equations for a linearly polarized electric field after eliminating the magnetic field. In \cite{wu2018finite}, the following equation emerges:
$$
-\Delta u - k^2 u = \rho 1_\Omega |u|^2 u \quad \text{in } \mathbb{R}^n,
$$
where $\Omega$ is the support of the nonlinear Kerr medium and $\rho$ is the Kerr constant. Motivated by the above, in this article, we consider the fractional Helmholtz equation with potential and cubic nonlinearity \eqref{Frac Helmholtz eq}. In particular, when $s=1$, it reduces to the Helmholtz equation, which frequently arises in scattering theory and quantum mechanics \cite{shen2025complex,ColtonKress1998}; the fractional Helmholtz equation also appears in geophysical electromagnetics \cite{weiss2020fractional,glusa2021fast} and quantum optics to describe the nonlocal characteristics of photon-atom interactions \cite{hiltunen2024nonlocal,hoskins2023fast}. Furthermore, when $k=1$ and $s=1$, the equation \eqref{Frac Helmholtz eq} becomes Ginzburg-Landau equation (for details see \cite{gutierrez2004non, furuya2020direct}). In this article, for $k=1$, our model \eqref{Frac Helmholtz eq} exactly becomes fractional Ginzburg-Landau equation. Motivated by its physical relevance across diverse phenomena in physics, we study the inverse scattering problem for the fractional Helmholtz equation to recover the compactly supported potential from the measurement of scattering amplitude. In particular we study the following Helmholtz equation: for $j=1,2$
\begin{align}\label{Frac Helmholtz eq}
    (-\Delta)^s u_j-k^{2s} u_j=Q_j(x) |u_j|^2u_j, \quad \mbox{in}\ \mathbb{R}^d,
\end{align}
associated with the following Sommerfield radiation condition:
\begin{align}\label{Sommerfield radiation}
    \lim\limits_{R\to\infty} \frac 1R \int_{1\leq \|x\| \leq R} | D^s_x u_j^{sc} -k^{2s}u_j^{sc}\hat{x}|^2 \, {\rm{d}}x=0,
\end{align}
where $\hat{x}=\frac{x}{\| x\|}$,  $D_x^s := \mathcal F^{-1} |\xi|^s \mathcal F$ and $u_i^{sc}$ is the scattering part of the solution $u$. For $j=1,2$, the potential $Q_i(x)\in \mathrm{L}^{\infty}(\mathbb{R}^d)$ is assumed to be compactly supported in $\Omega\subset \R^d$ and $s\in(\frac{d}{d+1},1)$.  The solution $u_j$ can be expressed as: for $j=1,2$
\begin{align}\label{representation of fractional solution}
    u_j= u_j^{\rm{in}}+ u_j^{\rm{sc}},
\end{align}
where $u_j^{\rm{in}}$ satisfy the initial wave equation
\begin{align}\label{initial wave}
    (-\Delta)^su_j^{\rm{in}}-k^{2s} u_j^{\rm{in}}=0, \quad \mbox{in}\ \R^d.
\end{align}
given by the Herglotz wave function
\[
u_j^{\rm{in}}=\int_{\mathbb{S}^{d-1}} e^{ikx\cdot\theta} g(\theta) {\rm{d}}\theta,
\]
for any function $g\in\mathrm{L}^2(\mathrm{S}^{d-1})$, whereas $u_j^{\rm{sc}}$ satisfy the scattering part of the solution, given by
\begin{align}\label{scattering wave}
    (-\Delta)^su_j^{\rm{sc}}-k^{2s} u_j^{\rm{sc}}= Q_j |u_j|^2u_j, \qquad \mbox{in}\ \mathbb{R}^d.
\end{align}
We would like to note that the initial wave solutions are the same for both the equations, i.e., $u_1^{\rm{in}}=u_2^{\rm{in}}$. Thanks to the Stein-Tomas theorem \cite{Tomas1975,Stein1993}, we have the following estimate
\begin{align}\label{Stein-Tomas}
    \| u_i^{\rm{in}}\|_{\mathrm{L}^{2\frac{d+1}{d-1}}(\R^d)} \leq C_{ST} \| g\|_{\mathrm{L}^2(\mathbb{S}^{d-1})}, \quad \mbox{for}\ j=1,2,
\end{align}
where $u_i^{\rm{in}}$ is given by \eqref{initial wave} and $C_{ST}$ is a positive constant depending on the dimension.
The following existence result we borrow from \cite[Theorem 1.3]{shen2025complex}:
\begin{thm}[\cite{shen2025complex}]\label{existence result}
    Let $u$ satisfies 
    \begin{align}\label{Frac Helmholtz eq no index}
    (-\Delta)^s u-k^{2s}u= Q(x) |u|^{t-1}u, \qquad \mbox{in} \ \R^d.
    \end{align}
    where $k>0$ and $Q(x)\in \mathrm{L}^{\infty}(\R^d)$ is compactly supported. Let the initial wave function is given by
    \[
    u^{\rm{in}} = \int_{\mathbb{S}^{d-1}} e^{ikx\cdot \theta} g(\theta) \, {\rm{d}}\theta,
    \]
    where $\| g\|_{\mathrm{L}^{2}(\R^d)}\le \epsilon$ for some $\epsilon>0$. Furthermore, assume 
    \[
    s\in \left(\frac{d}{d+1},1\right) \ \text{and}\ \frac{d-1+4s}{d-1} < t < \frac{d+1}{d+1-4s}. 
    \]
    Then, there exists a unique solution `$u$' to the equation \eqref{Frac Helmholtz eq no index}, satisfying 
    \[
    u\in \mathrm{L}^q(\R^{d}), \ \text{such that}\ \| u\|_{\mathrm{L}^q(\R^d)} \leq a,
    \]
    provided $u$ satisfy the following Sommerfield radiation condition:
\begin{align*}
    \lim\limits_{R\to\infty} \frac 1R \int_{1\leq \|x\| \leq R} | D^s_x u^{sc} -k^{2s}u^{sc}\hat{x}|^2 \, {\rm{d}}x=0.
\end{align*}
Here $a>0$ some positive constant depeding on $\epsilon$ and the exponent `$q$' satisfies
    \[
    \frac{d(t-1)}{2s}< q< \frac{2dt}{d+1}.
    \]
In particular, for the spatial dimension $d=3$ and $t=3$, we obtain that 
    \[
    u\in\mathrm{L}^4(\R^d).
    \]
\end{thm}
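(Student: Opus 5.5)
The plan is to recast \eqref{Frac Helmholtz eq no index} as a fixed-point problem for the Lippmann--Schwinger equation and apply the Banach contraction principle on a small ball of $\mathrm{L}^q(\R^d)$. Let $R_k$ denote the outgoing fractional resolvent, i.e. convolution with the fundamental solution $\Phi_k$ of $(-\Delta)^s-k^{2s}$. The Sommerfeld condition selects $\Phi_k$ as the Fourier multiplier $(|\xi|^{2s}-k^{2s}-i0)^{-1}$. A solution radiating in the sense of the Sommerfeld condition then satisfies
\begin{align*}
u = u^{\rm in} + R_k\big(Q|u|^{t-1}u\big) =: T(u).
\end{align*}
Conversely, any fixed point of $T$ in $\mathrm{L}^q$ solves \eqref{Frac Helmholtz eq no index} and radiates, since $u^{\rm sc}=R_k f$ with $f$ compactly supported and in a suitable Lebesgue space.

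\textbf{Step 1 (resolvent estimate).} I would establish a uniform Kenig--Ruiz--Sogge type bound
\begin{align*}
\|R_k f\|_{\mathrm{L}^q(\R^d)}\le C\|f\|_{\mathrm{L}^p(\R^d)}, \qquad p=q/t .
\end{align*}
To do this, split the multiplier with a smooth cutoff near the sphere $|\xi|=k$.

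Near the sphere, write $|\xi|^{2s}-k^{2s}\approx 2s k^{2s-1}(|\xi|-k)$. This reduces matters to the classical Helmholtz resolvent, whose $\mathrm{L}^p\to\mathrm{L}^q$ bounds follow from Stein--Tomas \eqref{Stein-Tomas} and Kenig--Ruiz--Sogge. They hold for $\frac1p-\frac1q\ge \frac{2}{d+1}$ together with the off-duality conditions $\frac1p>\frac{d+1}{2d}$ and $\frac1q<\frac{d-1}{2d}$.

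Away from the sphere, the multiplier behaves like $|\xi|^{-2s}$ at high frequency and is smooth and bounded at low frequency. Hardy--Littlewood--Sobolev then requires $\frac1p-\frac1q\le \frac{2s}{d}$. This is where $s>\frac{d}{d+1}$ enters: it gives $\frac{2s}{d}>\frac{2}{d+1}$, so the two constraints are compatible.

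Plugging in $\frac1p=\frac tq$, the conditions become:
\begin{itemize}
\item $\frac{t-1}{q}\le\frac{2s}{d}$, i.e. $q\ge \frac{d(t-1)}{2s}$;
\item $\frac{t-1}{q}\ge\frac2{d+1}$, i.e. $q\le\frac{(d+1)(t-1)}{2}$;
\item $\frac{t}{q}>\frac{d+1}{2d}$, i.e. $q<\frac{2dt}{d+1}$.
\end{itemize}
The stated range of $t$ is exactly what makes these intervals, together with $q>\frac{2d}{d-1}$, nonempty. Checking this arithmetic carefully is routine.

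\textbf{Step 2 (contraction).} Since $Q$ is bounded and compactly supported, Hölder's inequality gives
\begin{align*}
\|Q|u|^{t-1}u\|_{\mathrm{L}^{q/t}}\le \|Q\|_\infty\|u\|_{\mathrm{L}^q}^t,
\end{align*}
and similarly
\begin{align*}
\|Q(|u|^{t-1}u-|v|^{t-1}v)\|_{\mathrm{L}^{q/t}}\le C\|Q\|_\infty\big(\|u\|_q^{t-1}+\|v\|_q^{t-1}\big)\|u-v\|_q .
\end{align*}
For the incident wave, $u^{\rm in}\in \mathrm{L}^q$ with $\|u^{\rm in}\|_q\le C\epsilon$. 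This follows from the Stein--Tomas bound for $q\ge \frac{2(d+1)}{d-1}$. For smaller $q>\frac{2d}{d-1}$, use the general restriction/extension estimate for the sphere, or alternatively restrict $q$ accordingly within the admissible window.

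Take $a=2C\epsilon$ and $\epsilon$ small enough that $C\|Q\|_\infty a^{t-1}<\frac12$. Then $T$ maps the ball $\{\|u\|_q\le a\}$ into itself and is a contraction there, which gives existence and uniqueness in that ball. Finally, for $d=3$ and $t=3$ the window is $\frac{3}{s}<q<\frac92$, and $q=4$ is admissible when $s>\frac34=\frac{d}{d+1}$.

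\textbf{Main obstacle.} The hard part is Step 1: proving the uniform fractional resolvent estimate and verifying the outgoing/Sommerfeld identification. I would first try to reduce to the classical case via the factorization
\begin{align*}
|\xi|^{2s}-k^{2s}=(|\xi|^2-k^2)\,m(\xi),
\end{align*}
where $m$ is smooth and nonvanishing near the sphere. The smooth factor $m$ is handled as a Mikhlin multiplier, and the bounded-frequency tail is handled by Young's and HLS inequalities.
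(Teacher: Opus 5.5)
Your overall scheme (Lippmann--Schwinger reformulation, outgoing resolvent bound of Kenig--Ruiz--Sogge/Guti\'errez type, Stein--Tomas \eqref{Stein-Tomas} for $u^{\rm in}$, contraction on a small $\mathrm{L}^q$-ball) is the right one. The paper gives no proof of its own: it imports the result from \cite{shen2025complex}, which rests on the same ingredients (Propositions~\ref{resolvent estimate} and \ref{local22}). For the only case used later ($d=t=3$, $q=4$, $p=\frac43$), your contraction argument goes through.

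The exponent bookkeeping you call routine, however, has a genuine gap.
\begin{itemize}
\item Fixing $p=q/t$ and using only $\|Q\|_{\mathrm{L}^\infty}$ forces $q\le\frac{(d+1)(t-1)}{2}$.
\item Having $u^{\rm in}\in\mathrm{L}^q$ for arbitrary $g\in\mathrm{L}^2(\mathbb{S}^{d-1})$ requires $q\ge\frac{2(d+1)}{d-1}$. The ``general restriction/extension estimate'' you invoke below that exponent does not exist for $\mathrm{L}^2$ densities. The Knapp example shows the extension operator is unbounded there, so by the closed graph theorem there are arbitrarily small $g$ with $u^{\rm in}\notin\mathrm{L}^q$.
\item These two constraints are compatible only if $t\ge\frac{d+3}{d-1}$. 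This is strictly stronger than the hypothesis $t>\frac{d-1+4s}{d-1}$, because $s<1$.
\end{itemize}
For example, $d=3$, $s=0.9$, $t=2.9$ satisfies the hypotheses and $q=4$ lies in the stated range, yet your window is $4\le q\le 3.8$.

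The missing idea is to exploit the compact support of $Q$. H\"older on $\supp Q$ gives
\[
\|Q|u|^{t-1}u\|_{\mathrm{L}^p}\le\|Q\|_{\mathrm{L}^\infty}|\supp Q|^{\frac1p-\frac tq}\|u\|_{\mathrm{L}^q}^t \quad\text{for every } p\le q/t,
\]
and likewise for differences. So you can take $\frac1p=\max\bigl(\frac tq,\frac1q+\frac{2}{d+1}\bigr)$. The lower gap condition then costs nothing, and every $q$ in the stated range with $q\ge\frac{2(d+1)}{d-1}$ becomes admissible.

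Conversely, the part of the stated range with $q<\frac{2(d+1)}{d-1}$ (e.g.\ $\frac3s<q<4$ when $d=t=3$) cannot be reached, and you should flag it rather than claim it. In that range the resolvent bound gives $R_kf\in\mathrm{L}^q$ automatically for $f=Q|u|^{t-1}u$ with $u\in\mathrm{L}^q$. So $u\in\mathrm{L}^q$ would force $u^{\rm in}\in\mathrm{L}^q$, which fails for suitable small $g$.

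Finally, uniqueness among \emph{all} solutions in the ball satisfying the Sommerfeld condition needs two facts:
\begin{itemize}
\item[(i)] $R_kf$ satisfies that condition;
\item[(ii)] a radiating solution of $((-\Delta)^s-k^{2s})w=0$ vanishes.
\end{itemize}
$\mathrm{L}^q$-membership alone cannot replace (ii): Herglotz waves with smooth densities solve the homogeneous equation and lie in $\mathrm{L}^q$ for every $q>\frac{2d}{d-1}$. This is exactly what the paper delegates to the Agmon--H\"ormander-type estimate of Proposition~\ref{local22} via \cite{shen2025complex}. You only assert it, so as written your contraction yields uniqueness only among fixed points of $T$.
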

\rem
The uniqueness follows from the Sommerfeld radiation condition, formulated through the local estimate in \cite[Theorem 1.2]{shen2025complex}. Additionally, when $s=1$, $k=1$, \eqref{Frac Helmholtz eq} reduces to the Ginzburg--Landau equation studied in \cite{gutierrez2004non}. Our nonlinear term in \eqref{Frac Helmholtz eq} derives from the Ginzburg--Landau equation. Building on the work of Guti\'{e}rrez in \cite{gutierrez2004non}, the authors of \cite{shen2025complex} study its fractional version.
 
To state our result, let us define the \emph{scattering amplitude} (or \emph{far-field pattern}) by (see Section~\ref{sec:Scattered Field and Scattering Amplitude} for details)
\begin{equation}
u^\infty(\hat{x}) = \int_{\mathbb{R}^d} e^{-\mathsf{i} k \hat{x} \cdot y} Q(y) |u(y)|^2 u(y) \, dy, \quad \hat{x} \in \mathbb{S}^{d-1}. \label{eq:scattering_amplitude}
\end{equation}
Now the question is: can one recover the potential from a priori knowledge of the scattering amplitude $u^\infty$? Here we solve this problem in dimension $d=3$. In particular, we have following result:

\begin{thm}\label{recovery of potential}
    Let $s\in\left( \frac{4}{5},\frac 32\right)$ and $k>k_0>0$. For $j=1,2$, let $Q_j\in\mathrm{L}^{\infty}(\Omega)$ and compactly supported. Let $u_j$ satisfies: for $j=1,2$
    \[
    (-\Delta)^s u_{j}- k^{2s} u_j=Q_j |u_j|^2u_j \quad \text{ in }  \mathbb{R}^3.
    \]
    The solution $u_j$ is given by
    \[
    u_j= u_j^{\rm{in}}+u_j^{\rm{sc}},
    \]
    where $u_j^{\rm{in}}$ is the incident wave and $u_j^{\rm{sc}}$ is the scattering wave as defined in \eqref{initial wave} and \eqref{scattering wave}. Let the scattering amplitude corresponding to the scattering waves are the same i.e., for all $(k,\hat{x}, \theta)\in (k_0,+\infty)\times\mathbb{S}^{2}\times\mathbb{S}^{2}, \, k_0>0$,
    \begin{equation}
u^{\infty}_1(k,\hat x, \theta)=u^{\infty}_2(k,\hat x, \theta), \label{1.11} 
\end{equation}
where the scattering amplitude as defined in \eqref{eq:scattering_amplitude}. Then \[Q_1=Q_2.\] 
\end{thm}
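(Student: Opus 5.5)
We plan a high-frequency (Born-type) argument based on the Lippmann--Schwinger representation and a fractional resolvent decay estimate. Fix an incident direction $\theta\in\mathbb{S}^2$ and take $u^{\rm in}$ to be the plane wave $e^{\mathsf{i}kx\cdot\theta}$; since $Q_j$ is supported in the bounded set $\Omega$, one may equivalently use a Herglotz wave with density concentrating at $\theta$, multiplied by a small amplitude $\epsilon$ so that Theorem~\ref{existence result} applies. Write $u_j=\epsilon e^{\mathsf{i}kx\cdot\theta}+u_j^{\rm sc}$ with
\[
u_j^{\rm sc}=R_k\bigl(Q_j|u_j|^2u_j\bigr),\qquad R_k=\bigl((-\Delta)^s-k^{2s}-\mathsf{i}0\bigr)^{-1}.
\]
The first step is to establish a weighted resolvent estimate. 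On functions supported in $\Omega$ we expect
\[
\|\chi R_k\chi\|_{L^2\to L^2}\lesssim k^{1-2s}
\]
for a cutoff $\chi$ with $\chi\equiv1$ on $\Omega$, with an analogous $L^{4/3}\to L^4$ bound adapted to the $L^4$ framework of Theorem~\ref{existence result} for $d=3$. The symbol $(|\xi|^{2s}-k^{2s})^{-1}$ near the sphere $|\xi|=k$ behaves like $(2s k^{2s-1}(|\xi|-k))^{-1}$, which gives the factor $k^{1-2s}$ relative to the classical Helmholtz resolvent. The smoothing of the high-frequency part is controlled by the range of $s$.

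The second step is a contraction argument: $\|u_j^{\rm sc}\|_{L^4(\Omega)}\lesssim k^{-\gamma}\epsilon^3$ for some $\gamma>0$, uniformly for $k>k_0$, with $\epsilon$ chosen small independently of $k$. Substituting into the scattering amplitude \eqref{eq:scattering_amplitude}, we expand
\[
u_j^\infty(k,\hat x,\theta)=\epsilon^3\int e^{-\mathsf{i}k(\hat x-\theta)\cdot y}Q_j(y)\,dy+\mathcal{E}_j(k,\hat x,\theta).
\]
Here we use that $|e^{\mathsf{i}kx\cdot\theta}|^2e^{\mathsf{i}kx\cdot\theta}=e^{\mathsf{i}kx\cdot\theta}$. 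The error terms contain at least one factor $u_j^{\rm sc}$, so by H\"older on the bounded set $\Omega$ we get $|\mathcal{E}_j|\lesssim \|Q_j\|_\infty\epsilon^5k^{-\gamma}$.

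For the recovery, fix $\xi\in\mathbb{R}^3$ and, for $k>\max(k_0,|\xi|/2)$, choose $\hat x,\theta\in\mathbb{S}^2$ with $k(\hat x-\theta)=\xi$; this is possible since $|\hat x-\theta|$ ranges over $[0,2]$. Then \eqref{1.11} gives
\[
|\widehat{Q_1}(\xi)-\widehat{Q_2}(\xi)|\lesssim \epsilon^2k^{-\gamma}.
\]
Letting $k\to\infty$ yields $\widehat{Q_1}=\widehat{Q_2}$, hence $Q_1=Q_2$ by Fourier inversion, as $Q_j\in L^1$ by compact support.

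The main obstacle is the first and second steps: obtaining a quantitative decay $k^{-\gamma}$ of the scattered field that is uniform in $k$, with constants in the nonlinear fixed point that do not degenerate as $k\to\infty$. The existence theorem only gives smallness in $\epsilon$, not in $k$. I would first prove the local resolvent bound via the decomposition $\frac{1}{|\xi|^{2s}-k^{2s}}=\frac{1}{2sk^{2s-1}}\cdot\frac{1}{|\xi|-k}+(\text{smooth remainder})$, treating the principal part with the classical Agmon--H\"ormander $k^{-1}$ estimate. If the pure $L^2$ route fails, I would instead use Stein--Tomas type restriction bounds \eqref{Stein-Tomas} rescaled in $k$ to track powers of $k$ in the $L^{4/3}\to L^4$ norm.
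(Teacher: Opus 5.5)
Your proof is correct in outline. It uses the same mechanism as the paper: a high-frequency Born approximation in which every term containing $u_j^{\rm sc}$ is killed by the $k$-decay of the fractional resolvent. The bookkeeping differs, though.

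The paper does not read the far field directly. It builds classical Helmholtz solutions $w_j$ with source $F_j=Q_j|u_j|^2u_j$, uses Lemma~\ref{Rellich} to get $w_1-w_2\in H^2_0(B)$, and integrates by parts against $e^{\mathrm{i}\rho\cdot x}$ with $|\rho|=k$. This only recovers $\widehat{F_1-F_2}=0$ on $\{|\xi|=k\}$, which \eqref{1.11} already states. Its parametrization $\rho=m+l$, $\theta=(m-l)/k$, $m\perp l$, is your choice $\hat x=-\rho/k$, $\xi=-2m$, so your direct route is shorter and avoids the Rellich detour.

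Conversely, the paper controls the error terms with the global $L^{4/3}\to L^4$ bound and $\|u_j\|_{L^4}<\infty$ from Theorem~\ref{existence result}. It never checks that this bound is uniform in $k$. Your contraction on $L^4(\Omega)$ supplies exactly that uniformity. It is also the right setting for plane-wave incidence, since plane waves are not in $L^4(\mathbb{R}^3)$.

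For the rate, go straight to your fallback. The weighted $L^2$ bound $k^{1-2s}$ is true but cannot close a cubic fixed point. Scaling Proposition~\ref{resolvent estimate} with $\frac1p-\frac1q=\frac12$ gives $\|R_k\|_{L^{4/3}\to L^4}\lesssim k^{3/2-2s}$, so $\gamma=2s-\frac32>0$ for all $s>\frac34$. The $k^{-3/2}$ displayed in Proposition~\ref{integral estimate, scattering solution} does not match this scaling, but the decay survives on the stated range.

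Finally, drop $\epsilon$. The hypothesis \eqref{1.11} is data for unit-amplitude plane waves (cf.\ \eqref{sc_amp_3}), and for a nonlinear problem $\epsilon$-amplitude data is different data. With amplitude $1$, your map $v\mapsto e^{\mathrm{i}kx\cdot\theta}+R_k(Q_j|v|^2v)$ is already a contraction on the ball of radius $2|\Omega|^{1/4}$ in $L^4(\Omega)$ once $k^{3/2-2s}\|Q_j\|_{L^\infty}|\Omega|^{1/2}$ is small. That is all the limit $k\to\infty$ needs, provided, as the paper implicitly assumes via Theorem~\ref{existence result}, that the data are generated by these solutions.
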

\rem Due to the lack of resolvent estimates for all fractional exponents $s$ (see Proposition \ref{resolvent estimate}), and the restriction arising from the Sobolev embedding (see Lemma \ref{local integrability estimate}), we restrict our analysis to $s \in \left(\frac{4}{5}, \frac{3}{2}\right)$. For other values of the fractional exponent $s$, the problem remains open.
\rem Note that here we vary all three variables $(k,\hat{x},\theta) \in (k_0,\infty) \times \mathbb{S}^{d-1} \times \mathbb{S}^{d-1}$ in the scattering amplitude to determine the potential $Q$. This corresponds to data of dimension $2d-1$. A more challenging question arises when $k$ is fixed and only $(\hat{x},\theta) \in \mathbb{S}^{d-1} \times \mathbb{S}^{d-1}$ are varied: can the potential $Q$ still be uniquely recovered? In this case, the dimension of the data is $2d-2$. This problem is related to the Ginzburg-Landau equation when $k=1$ is fixed. In the local case, i.e., when $s=1$, this problem has been studied in \cite{furuya2020direct}. To the best of our knowledge, the corresponding scattering problem for the fractional Ginzburg-Landau equation remains open, which corresponds to the case $k=1$ in \eqref{Frac Helmholtz eq}. We state the open problem below.

\textbf{Open problem: Ginzburg-Landau equation}

\begin{itemize}
    \item [$\bullet$]  For $j=1,2$, let $Q_j\in\mathrm{L}^{\infty}(\Omega)$ and compactly supported. Let $u_j$ satisfies: for $j=1,2$
    \[
    (-\Delta)^s u_{j}- u_j=Q_j |u_j|^2u_j \quad \text{ in }  \mathbb{R}^3.
    \]
    The solution $u_j$ is given by
    \[
    u_j= u_j^{\rm{in}}+u_j^{\rm{sc}},
    \]
    where $u_j^{\rm{in}}$ is the incident wave and $u_j^{\rm{sc}}$ is the scattering wave as defined in \eqref{initial wave} and \eqref{scattering wave}. Let the scattering amplitude corresponding to the scattering waves are the same i.e., for all $(\hat{x}, \theta)\in \mathbb{S}^{2}\times\mathbb{S}^{2}$,
    \begin{equation*}
u^{\infty}_1(1,\hat x, \theta)=u^{\infty}_2(1,\hat x, \theta).
\end{equation*}
where the scattering amplitude as defined in \eqref{eq:scattering_amplitude}. Then \[Q_1=Q_2.\] 
\end{itemize}

In recent years, the study of inverse scattering has been quite dynamic. Potential recovery using far-field data in the classical case was studied in \cite{liu2015determining}. For the fractional case, the authors in \cite{das2025inverse} recently studied the inverse scattering problem for the fractional Schr\"{o}dinger operator. For the fractional Helmholtz equation, we refer to \cite{li2026inverse}. For the nonlinear and semilinear cases with local operators, we refer the reader to \cite{lassas2021inverse,furuya2020direct,hogan2023recovery} and references therein. Most of our technique is motivated by the work of \cite{li2026inverse, shen2025complex, das2025inverse}.

\section{Resolvent Estimate and Limiting Absorption Principle}

\begin{prop}\label{local22}
Assume that $\frac{d}{d+1}\leq s<\frac{d}{2}$, and $u$ satisfies
\[
(-\Delta)^s u-k^{2s}u =f,
\]
with $f\in\mathrm{L}^p(\R^d)$ and $p\in(1,\infty)$. Then there exists a constant $C>0$, independent of $k$, such that
\begin{equation} \label{local23}
	\sup\limits_{R \geq 1 / k^s} \Big( \frac 1 R \int_{1<|x|<R} |u^{\rm{sc}}(x)|^2 \dif x \Big)^{1/2}
	\leq Ck^{ds (\frac 1 p - \frac 1 2) - \frac 3 2 s} \|f\|_{\mathrm{L}^{p}(\R^d)},
\end{equation}
whenever $\frac 1 {d+1} \leq \frac 1 p - \frac 1 2 <\frac s 2$ for $d=3,4$ or $\frac 1 {d+1} \leq \frac 1 p - \frac 1 2 < \frac s d$ for $d \geq 5$. Moreover,
\begin{equation} \label{local24}
	\sup\limits_{R \geq 1/k^s} \Big(\frac 1 R \int_{1<|x|<R}
	|D_{x}^{s} u^{\rm{sc}}(x)|^2 \dif x \Big)^{1/2}
	\leq Ck^{ds (\frac 1 p - \frac 1 2) - \frac s 2} \|f\|_{\mathrm{L}^p(\R^d)},
\end{equation}
whenever $\frac 1 {d+1} \leq \frac 1 p - \frac 1 2 < \frac s d$ for $d \geq 3$.
\end{prop}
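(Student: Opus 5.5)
Throughout, $u^{\rm sc}$ is the outgoing solution $u^{\rm sc}=\lim_{\varepsilon\downarrow0}\big((-\Delta)^s-(k^{2s}+\mathsf{i}\varepsilon)\big)^{-1}f$. The plan has three steps: rescale to $k=1$, localise the symbol near the characteristic sphere $|\xi|=1$, and apply the known Agmon--Hörmander/Kenig--Ruiz--Sogge type estimates to the singular piece.

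\textbf{Step 1 (scaling).} We reduce to $k=1$. Set $v(y)=u^{\rm sc}(y/k)$ and $g(y)=k^{-2s}f(y/k)$. Then $(-\Delta)^s v - v = g$, and $\|g\|_{L^p}=k^{-2s+d/p}\|f\|_{L^p}$. The averaged quantity $\frac1R\int_{|x|<R}|u^{\rm sc}|^2$ becomes $k^{1-d}\frac{1}{kR}\int_{|y|<kR}|v|^2$, and the restriction $R\ge k^{-s}$ becomes a lower bound on $kR$. The annulus $1<|x|$ may simply be enlarged to the ball. Collecting the powers of $k$ produces the exponent in \eqref{local23}, and the extra factor $|\xi|^s$ in \eqref{local24} produces the shift by $+s$. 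I would only check that the stated exponents $ds(\frac1p-\frac12)-\frac32 s$ and $-\frac s2$ come out exactly after this bookkeeping; the paper's normalisation $k^{s}$ versus $k$ may require interpreting the frequency as $k^{s}$ in the scaling. The substance is then the $k=1$ estimate
\[
\sup_{R\ge1}\Big(\frac1R\int_{|y|<R}|v|^2\Big)^{1/2}+\sup_{R\ge1}\Big(\frac1R\int_{|y|<R}|D^s v|^2\Big)^{1/2}\le C\|g\|_{L^p}.
\]

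\textbf{Step 2 (frequency splitting).} Choose a smooth cutoff $\chi$ equal to $1$ near $|\xi|=1$ and supported in $\frac12<|\xi|<2$. Split $\widehat v=\chi\widehat v+(1-\chi)\widehat v$.

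\emph{Off-shell part.} Here the multiplier $(1-\chi)(|\xi|^{2s}-1)^{-1}$ is smooth and decays like $|\xi|^{-2s}$. The Hardy--Littlewood--Sobolev inequality (with the low frequencies handled trivially) maps $L^p\to L^2$ when $\frac1p-\frac12\le\frac{2s}{d}$. Since $\frac1R\int_{|y|<R}|w|^2\le\|w\|_{L^2}^2$ for $R\ge1$, this part is harmless for $u^{\rm sc}$. For $D^s u^{\rm sc}$ the multiplier decays like $|\xi|^{-s}$, and HLS then requires $\frac1p-\frac12\le\frac sd$. This is exactly the origin of the constraint $<\frac sd$ in \eqref{local24}. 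For $d=3,4$ the condition $<\frac s2$ in \eqref{local23} is what HLS (together with $s<\frac d2$) allows, once the constraint coming from the on-shell part is included.

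\emph{On-shell part.} Near the sphere, write $|\xi|^{2s}-1=(|\xi|-1)\,m(\xi)$ with $m$ smooth and nonvanishing. This reduces the on-shell part to the classical outgoing resolvent $(-\Delta-1-\mathsf{i}0)^{-1}$ composed with smooth compactly supported multipliers, which are bounded on every $L^p$.

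\textbf{Step 3 (main obstacle: the classical estimate).} We need the Agmon--Hörmander type bound $\sup_R\big(R^{-1}\int_{B_R}|(-\Delta-1-\mathsf{i}0)^{-1}h|^2\big)^{1/2}\lesssim\|h\|_{L^p}$ in the range $\frac1{d+1}\le\frac1p-\frac12$. I would obtain it by duality: the dual statement is $\|\widehat{h}\,\|_{L^2}$-type control of restrictions to spheres, which follows from Stein--Tomas \eqref{Stein-Tomas}. In detail, write the resolvent as a principal value part plus $\mathsf{i}\pi$ times the spherical-measure part. The spherical part is handled by $TT^*$ and Stein--Tomas with the endpoint $p=\frac{2(d+1)}{d+3}$, combined with the trace bound $\frac1R\int_{B_R}|\widehat{\sigma\phi}|^2\lesssim\|\phi\|_{L^2(\mathbb S^{d-1})}^2$. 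The principal value part is handled by decomposing dyadically in $\big||\xi|-1\big|$ and summing, as in Kenig--Ruiz--Sogge and Gutiérrez. Interpolating with the off-shell range gives the full range of $p$.

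The expected difficulty lies in this step, specifically in uniformly summing the dyadic pieces near the sphere so that the bound is independent of $R$. I would follow Gutiérrez's argument essentially verbatim, since after the factorisation in Step 2 only a smooth bounded multiplier has been added.
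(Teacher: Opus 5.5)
\textbf{Gap in Step 1: the scaling does not give the stated exponents, and those exponents are false.}

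The paper states this proposition without proof (it is imported from \cite{shen2025complex}), so your outline has to stand on its own. Its architecture is sound:
\begin{itemize}
\item $\tilde m(\xi)=(|\xi|^2-1)/(|\xi|^{2s}-1)$ is smooth and positive near $|\xi|=1$, so your factorisation transfers the outgoing $-\mathsf{i}0$ prescription correctly;
\item the off-shell pieces are controlled by Young and Bessel-potential bounds as you say;
\item the on-shell piece is the classical outgoing resolvent applied to a frequency-localised datum $Tg$ with $\|Tg\|_{\mathrm{L}^p}\lesssim\|g\|_{\mathrm{L}^p}$.
\end{itemize}
The genuine gap is the bookkeeping you postponed in Step 1, and it cannot be closed. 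With your own normalisation ($\|g\|_{\mathrm{L}^p}=k^{-2s+d/p}\|f\|_{\mathrm{L}^p}$, and the factor $k^{(1-d)/2}$ from the averaged integral), the $k=1$ bound gives
\[
\sup_{R\ge 1/k}\Big(\frac1R\int_{|x|<R}|u^{\rm sc}|^2\,\mathrm{d}x\Big)^{1/2}\le C\,k^{d(\frac1p-\frac12)+\frac12-2s}\,\|f\|_{\mathrm{L}^p(\R^d)}.
\]
The same holds with exponent $d(\frac1p-\frac12)+\frac12-s$ for $D^s_xu^{\rm sc}$. The exponents in \eqref{local23}--\eqref{local24} differ from these by $(s-1)\big(d(\frac1p-\frac12)+\frac12\big)$, which vanishes only for $s=1$.

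For $s\neq1$ the stated exponents are false. Take $f_k(x)=k^{2s}\eta(kx)$, with $\eta$ Schwartz, $\hat\eta$ supported in $\{\frac12<|\xi|<2\}$ and not identically zero on $\mathbb S^{d-1}$. Then $u^{\rm sc}_k(x)=v(kx)$, where $v$ is the outgoing solution with datum $\eta$. Its far field is nontrivial, so letting $R\to\infty$ shows that the left side of \eqref{local23} is at least $c\,k^{(1-d)/2}$. The right side is $C\,k^{ds(\frac1p-\frac12)-\frac32s+2s-\frac dp}\|\eta\|_{\mathrm{L}^p}$. Hence the ratio of right side to left side is at most a constant times $k^{(s-1)(d(\frac1p-\frac12)+\frac12)}$. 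This tends to $0$ as $k\to\infty$ if $s<1$, and as $k\to0$ if $s>1$.

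The displayed exponents, and the threshold $R\ge 1/k^s$, are Guti\'{e}rrez's classical ones with $k$ replaced by $k^s$. That is, they belong to $-\Delta-k^{2s}$ rather than to $(-\Delta)^s-k^{2s}$. Reinterpreting the frequency as $k^s$ cannot rescue this, because the dilation $x\mapsto k^sx$ does not normalise $(-\Delta)^s-k^{2s}$ unless $s=1$. So your argument proves, and can only prove, the corrected bounds above with $\sup_{R\ge1/k}$. Note also that $R\ge k^{-s}$ rescales to $kR\ge k^{1-s}$. For $s<1$ and $k<1$ this admits $kR<1$, a range your $k=1$ estimate over $R'\ge1$ does not cover.

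\textbf{Smaller points on Step 3 and the admissible range.}

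The $\delta$-part of Step 3 is fine: use $\frac1R\int_{B_R}|\widehat{\phi\,\mathrm{d}\sigma}|^2\lesssim\|\phi\|^2_{\mathrm{L}^2(\mathbb S^{d-1})}$ together with Stein--Tomas. The principal-value part, however, cannot be obtained by summing the shells $\big||\xi|-1\big|\sim2^{-j}$ in absolute value. By those means each shell with $2^j\gtrsim R$ is only bounded by $O(\|h\|_{\mathrm{L}^{p_0}})$, $p_0=\frac{2(d+1)}{d+3}$, with no decay in $j$. The sum therefore diverges unless you exploit the cancellation between the two sides of the sphere. Since your reduction only needs the classical $k=1$ estimate for $Tg$, cite Guti\'{e}rrez's bound as a black box instead of re-deriving it.

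Finally, your method gives \eqref{local23} on the range $\frac1{d+1}\le\frac1p-\frac12\le\frac{2s}d$, $p>1$, which contains the stated one. So no separate explanation of the restriction $\frac1p-\frac12<\frac s2$ is needed, and the one you offer is not accurate.
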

The following result yields an estimate of the scattering part of the solution.
\begin{prop}[\cite{shen2025complex}]\label{integral estimate, scattering solution}
    Let $s\in\left[\frac{d}{d+1},\frac{d}{2}\right)$, and let $u$ satisfies
    \[
    (-\Delta)^su-k^{2s}u=f,
    \]
    with $f\in\mathrm{L}^p(\R^d)$ and $p\in(1,\infty)$. Let us fix some $1<p<q<\infty$ satisfying
\begin{equation} \label{les_con}
	\frac 2 {d+1} \leq \frac 1 p - \frac 1 q \leq \frac {2s} d, \qquad
	\frac 1 p > \frac {d+1} {2d}, \qquad
	\frac 1 q < \frac {d-1} {2d}.
\end{equation}
Let the scattering field of the solution is given by
\begin{align}\label{scattering field, def}
    u^{\mathrm{sc}}(x)=\int_{\mathbb{R}^d}\Phi_{s}(x,y)f(y)\,\mathrm{d} y, \ \ \text{ for } x \in \mathbb{R}^d,
\end{align}
 Then there exists a positive constant $C_{p,q}>0$, such that
 \[
 \| u^{\rm{sc}}\|_{\mathrm{L}^q(\R^d)} \leq C_{p,q}\big(k^{2s}\big)^{\frac{d}{2s}\left(\frac 1p -\frac{1}{q}-1\right)} \| f\|_{\mathrm{L}^p(\R^d)}.
 \]
 In particular for spatial dimension $d=3$ and for the special choice of exponent $p=\frac 43$ and $q=4$, we have that
 \begin{align}\label{scattering wave, integral estimate, dimension 3}
    \| u^{\rm{sc}}\|_{\mathrm{L}^4(\R^3)} \leq C_{4,\frac 43} \big(k^{2s}\big)^{-\frac{3}{4s}} \| f\|_{\mathrm{L}^{\frac 43}(\R^3)}.  
 \end{align}
\end{prop}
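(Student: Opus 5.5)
The plan is to reduce to the unit-frequency case $k=1$ by scaling. The $k=1$ estimate is the uniform Sobolev (Kenig--Ruiz--Sogge type) resolvent bound for the fractional operator, which is the estimate underlying \cite{shen2025complex}. I would then track the powers of $k$ and compare them with the exponent $\frac{d}{2s}\left(\frac1p-\frac1q-1\right)$ in the statement. A caveat up front: the scaling below produces the exponent $\frac{d}{2s}\left(\frac1p-\frac1q\right)-1$, not the stated one. The two agree only when $d=2s$, so matching the statement is the main obstacle; I address it in the last paragraph.

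\emph{Step 1 (the case $k=1$).} Write $\Phi_s(\cdot,\cdot)$ for the outgoing kernel of $((-\Delta)^s-1-\mathrm{i}0)^{-1}$. In Fourier variables the symbol is $(|\xi|^{2s}-1-\mathrm{i}0)^{-1}$. Split it with a smooth cutoff $\chi$ equal to $1$ near the sphere $|\xi|=1$:
\begin{itemize}
\item On the support of $\chi$, factor $|\xi|^{2s}-1=(|\xi|-1)\,m(\xi)$ with $m$ smooth and nonvanishing. This piece then behaves like the classical Helmholtz resolvent $(|\xi|^2-1-\mathrm{i}0)^{-1}$ times a smooth multiplier. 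Stein--Tomas restriction, as in \eqref{Stein-Tomas}, together with Kenig--Ruiz--Sogge gives $L^p\to L^q$ boundedness exactly when $\frac{2}{d+1}\le\frac1p-\frac1q$, $\frac1p>\frac{d+1}{2d}$ and $\frac1q<\frac{d-1}{2d}$.
\item The remaining piece, $(1-\chi)(|\xi|^{2s}-1)^{-1}$, is smooth and decays like $|\xi|^{-2s}$. By Hardy--Littlewood--Sobolev it maps $L^p\to L^q$ when $\frac1p-\frac1q\le\frac{2s}{d}$.
\end{itemize}
Together these use precisely the hypotheses \eqref{les_con}, so $\|u^{\rm sc}\|_{L^q}\le C_{p,q}\|f\|_{L^p}$ when $k=1$.

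\emph{Step 2 (scaling).} For general $k$, set $v(x)=u^{\rm sc}(x/k)$. Since $(-\Delta)^s$ is homogeneous of degree $2s$, $v$ solves $(-\Delta)^s v-v=k^{-2s}f(\cdot/k)$, and outgoingness is preserved. Apply Step 1 and change variables using $\|g(\cdot/k)\|_{L^r}=k^{d/r}\|g\|_{L^r}$:
\[
k^{d/q}\|u^{\rm sc}\|_{L^q}\le C_{p,q}\,k^{-2s}\,k^{d/p}\|f\|_{L^p}.
\]
This gives $\|u^{\rm sc}\|_{L^q}\le C_{p,q}(k^{2s})^{\frac{d}{2s}\left(\frac1p-\frac1q\right)-1}\|f\|_{L^p}$.

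\emph{Step 3 (matching the stated exponent --- main obstacle).} The stated exponent equals the Step 2 exponent plus $1-\frac{d}{2s}$, which is negative because $s<\frac d2$.
\begin{itemize}
\item For $k\le1$ this costs nothing: $(k^{2s})^{1-\frac{d}{2s}}\ge1$, so the stated bound follows from Step 2 with the same constant.
\item For $k\ge1$ the stated bound is stronger than what scaling gives, and by the dimensional analysis above it cannot hold uniformly in $k\to\infty$ with a $k$-independent constant.
\end{itemize}
So I would prove the statement in the form actually needed later: for $k$ in a fixed range $(k_0,K]$, absorbing the factor $(k^{2s})^{\frac d{2s}-1}\le K^{d-2s}$ into $C_{p,q}$. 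I would also flag that the special case \eqref{scattering wave, integral estimate, dimension 3} should read $(k^{2s})^{\frac{3}{4s}-1}$ if uniformity in large $k$ is required. The first thing I would check is whether the later use of \eqref{scattering wave, integral estimate, dimension 3} in the proof of Theorem~\ref{recovery of potential} survives with the scaling-correct exponent. Since $\frac{3}{4s}-1<0$ for $s>\frac34$, it still decays in $k$, and that decay should be all the limit $k\to\infty$ requires.
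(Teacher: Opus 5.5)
Your proof is correct, and your diagnosis of the exponent is also correct. The paper's only justification is that the proposition is a direct consequence of Proposition~\ref{resolvent estimate} and the limiting absorption principle. But that resolvent bound, with $\lambda=k^{2s}$ and $\epsilon\to0$, gives exactly $(k^{2s})^{\frac{d}{2s}(\frac1p-\frac1q)-1}$, which is your Step~2 exponent. So the printed $\frac{d}{2s}(\frac1p-\frac1q-1)$ is a misplaced parenthesis. The error carries over to \eqref{scattering wave, integral estimate, dimension 3}, whose correct form is $(k^{2s})^{\frac{3}{4s}-1}=k^{\frac32-2s}$, and to Lemma~\ref{initial regularity, scattering wave}. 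Dilation gives the exact identity $\|T_k\|_{\mathrm{L}^p\to\mathrm{L}^q}=k^{d(\frac1p-\frac1q)-2s}\|T_1\|_{\mathrm{L}^p\to\mathrm{L}^q}$ for the outgoing resolvent $T_k$. Hence the printed version is false for large $k$ when $C_{p,q}$ is independent of $k$. For the same reason you do not need to retreat to a bounded range $(k_0,K]$, which would not serve the later limit $k\to\infty$ anyway. The correct statement is simply the one with the corrected exponent, valid for all $k>0$ with a $k$-independent constant.

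Your route differs from the paper's. The paper works off the real axis: it cites a bound for $((-\Delta)^s-(\lambda+\mathrm{i}\epsilon))^{-1}$ that is uniform in $\epsilon$, then lets $\epsilon\to0$ via the limiting absorption principle, using lower semicontinuity of the $\mathrm{L}^q$ norm under distributional convergence. You instead work directly with the boundary-value symbol $(|\xi|^{2s}-1-\mathrm{i}0)^{-1}$ at $k=1$ and then use exact homogeneity. You reduce the near-sphere part to classical Helmholtz by factoring $|\xi|^{2s}-1=(|\xi|-1)m(\xi)$; note it is the positivity of $m$ that preserves the $-\mathrm{i}0$ prescription. The remaining part you treat as a fractional integral. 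Your argument is more self-contained and makes the $k$-dependence exact, which is what exposes the misprint. The paper's route additionally supplies the $\epsilon>0$ bounds behind the definition \eqref{phi} of $\Phi_s$.

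Step~1 needs two small repairs.
\begin{itemize}
\item The multiplier $(1-\chi)(|\xi|^{2s}-1)^{-1}$ is not smooth at $\xi=0$, so its kernel decays only like $|x|^{-d-2s}$. That tail is still integrable, so use HLS at $\frac1p-\frac1q=\frac{2s}{d}$ and Young's inequality below it.
\item For $s>1$, \eqref{les_con} allows $\frac1p-\frac1q>\frac2d$. There $(-\Delta-1-\mathrm{i}0)^{-1}$ itself is not $\mathrm{L}^p\to\mathrm{L}^q$ bounded, so you cannot place the whole smooth cutoff $\tilde m(D)$ on one side. Instead split $\tilde m=\tilde m_1\tilde m_2$ and apply Young's inequality on both sides. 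This passes through an intermediate pair $p\le p_1<q_1\le q$ satisfying the Kenig--Ruiz--Sogge/Guti\'errez conditions with $\frac{2}{d+1}\le\frac1{p_1}-\frac1{q_1}\le\frac2d$, and such a pair always exists.
\end{itemize}
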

The above proposition is a direct consequence of the following resolvent estimate and limiting absorption principle.
\begin{prop}[\cite{das2025inverse,shen2025complex}]\label{resolvent estimate}
Let $d\geq 3$, $\frac{d}{d+1}\leq s<\frac{d}{2}$,  $1<p<q<\infty$  are Lebesgue exponents satisfying \eqref{les_con} and $\lambda>0$. Then, for all $\epsilon\in(0,1)$, the following estimate holds 
\begin{align} \label{resol_frac_schro}
	\|((-\Delta)^s-(\lambda+\mathrm{i}\epsilon))^{-1}\|_{L^{p}-L^{q}}\leq C|\lambda+i\epsilon|^{\frac{d}{2s}(\frac{1}{p}-\frac{1}{q})-1}.
\end{align}
where the constant $C>0$ is uniform w.r.t. $\lambda$ and $\epsilon$. 
\newline
Furthermore, there exists a $\lambda_0>0$, such that for $\lambda>\lambda_0$, 
\begin{align} \label{resol_frac_schro}
	\sup_{0<\epsilon<1}\|((-\Delta)^s-(\lambda+\mathrm{i}\epsilon))^{-1}\|_{L^{p}-L^{q}}\leq C|\lambda|^{\frac{d}{2s}(\frac{1}{p}-\frac{1}{q})-1},\,\,\lambda>\lambda_0,
\end{align}
where the constant $C(\lambda_0)>0$ is uniform with respect to $\lambda>\lambda_0$. 
\end{prop}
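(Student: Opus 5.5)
The plan is to write the resolvent as a Fourier multiplier, $R(z)=\mathcal F^{-1}(|\xi|^{2s}-z)^{-1}\mathcal F$ with $z=\lambda+\mathrm{i}\epsilon$, and then reduce to the classical uniform Sobolev/resolvent estimate of Kenig--Ruiz--Sogge for $-\Delta-\zeta$ with $\zeta\in\mathbb C$. The first step is scaling. The dilation $x\mapsto |z|^{1/(2s)}x$ maps $R(z)$ to $|z|^{-1}R(z/|z|)$ conjugated by dilations, and this produces exactly the factor $|z|^{\frac{d}{2s}(\frac1p-\frac1q)-1}$. So it suffices to prove
\[
\sup_{|w|=1,\ \mathrm{Im}\, w>0}\|((-\Delta)^s-w)^{-1}\|_{L^p\to L^q}<\infty .
\]
The uniformity for $\lambda>\lambda_0$, $0<\epsilon<1$ is then automatic, since $|\lambda+\mathrm{i}\epsilon|\simeq\lambda$ there.

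Next, I would split the symbol $m_w(\xi)=(|\xi|^{2s}-w)^{-1}$ with a smooth cutoff $\chi$ supported near the sphere $|\xi|^{2s}=\mathrm{Re}\,w$. When $w$ is away from the positive real axis, or away from the cutoff, $m_w$ is a smooth symbol. Its behaviour is $\sim|\xi|^{-2s}$ at infinity, and it is bounded near the origin (using $s<d/2$ for integrability of the kernel). In this regime the $L^p\to L^q$ bound follows from Hardy--Littlewood--Sobolev via $|\xi|^{-2s}$, for which we need $\frac1p-\frac1q\le \frac{2s}{d}$, together with Mikhlin multiplier estimates. The restriction $\frac1p-\frac1q\le\frac{2s}{d}$ is exactly the upper condition in \eqref{les_con}.

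Near the sphere I would factor
\[
|\xi|^{2s}-w=(|\xi|^2-w^{1/s})\,h_w(\xi),
\]
where $h_w$ is smooth and nonvanishing on the support of $\chi$ (near $|\xi|=1$), with bounds uniform in $w$. Then
\[
\chi m_w=\chi h_w^{-1}\,(|\xi|^2-w^{1/s})^{-1},
\]
and the operator with multiplier $\chi h_w^{-1}$ is a uniformly bounded Schwartz-kernel convolution. This reduces the local part to the second-order resolvent $(-\Delta-\zeta)^{-1}$ with $\zeta=w^{1/s}$, $|\zeta|=1$. Its $L^p\to L^q$ bound is the Kenig--Ruiz--Sogge/Gutiérrez estimate, valid precisely under $\frac2{d+1}\le\frac1p-\frac1q$, $\frac1p>\frac{d+1}{2d}$, $\frac1q<\frac{d-1}{2d}$, i.e.\ the remaining conditions of \eqref{les_con}. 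One caveat: the cutoff commutes with everything, and boundedness on $L^p\to L^q$ with $p<q$ of a Schwartz multiplier composed with the KRS operator follows from Young's inequality on the appropriate side.

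The main obstacle is ensuring uniformity as $\epsilon\to0$ (i.e.\ $w\to$ the real axis), where the symbol becomes singular on the sphere. This is handled entirely by the uniform-in-$\zeta$ nature of the Kenig--Ruiz--Sogge estimate, so care is needed that $\zeta=w^{1/s}$ stays in a compact set and that $h_w$ is uniformly smooth and nonvanishing. The latter holds because $|\xi|^{2s}-w$ vanishes only where $|\xi|^2=w^{1/s}$ on the chosen branch. Finally, the limiting absorption statement (the existence of the boundary value as $\epsilon\downarrow0$, giving the outgoing $\Phi_s$ in \eqref{scattering field, def}) follows from the uniform bound, weak-$*$ compactness, and the pointwise convergence of the multipliers on Schwartz functions.
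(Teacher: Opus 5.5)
Your argument is sound, but it takes a different route from the one the paper relies on. The paper gives no proof of this proposition: it is quoted from \cite{das2025inverse,shen2025complex}. The only mechanism the paper itself shows for passing from $-\Delta$ to $(-\Delta)^s$ (Section 2.1) is the fractional-power resolvent identity of \cite{martinez2001theory}. That identity writes $((-\Delta)^s-k^{2s})^{-1}$ as $\frac{k^{2(1-s)}}{s}(-\Delta-k^2)^{-1}$ plus a non-oscillatory average of $(\lambda-\Delta)^{-1}$, whose kernel is locally no worse than $|x-y|^{2s-d}$ and decays like $|x-y|^{-d-2s}$ \cite{huang2018remarks}. The first piece is then a Kenig--Ruiz--Sogge bound, and the second a Hardy--Littlewood--Sobolev/Young bound.

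You instead work on the symbol directly: exact scaling to $|w|=1$, a fixed cutoff near $|\xi|=1$, the factorization $|\xi|^{2s}-w=(|\xi|^2-w^{1/s})h_w(\xi)$ for $w$ near $1$, and multiplier bounds off the shell. Your route buys generality. The identity used in Section 2.1 is a $0<s<1$ formula, whereas the proposition is stated for $\frac{d}{d+1}\le s<\frac d2$, and the main theorem uses $s$ up to $\frac32$. Your argument covers that whole range uniformly in $w$. The subordination route buys explicit kernel information, which the paper needs anyway for the far-field asymptotics \eqref{sasy}.

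There are two points to tighten.

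First, the Kenig--Ruiz--Sogge/Guti\'errez bound for $(-\Delta-\zeta)^{-1}$ also requires $\frac1p-\frac1q\le\frac2d$, which you dropped. This follows from \eqref{les_con} when $s\le 1$, but it can fail for $1<s<\frac d2$. Since your shell piece is frequency-localized, you can fix this as follows:
\begin{itemize}
\item Apply the KRS bound at an intermediate pair $(\tilde p,\tilde q)$ with $\frac1{\tilde p}\in(\frac{d+1}{2d},\frac1p]$, $\frac1{\tilde q}\in[\frac1q,\frac{d-1}{2d})$ and $\frac1{\tilde p}-\frac1{\tilde q}\in[\frac2{d+1},\frac2d]$. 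Such a pair exists because the attainable differences fill $(\frac1d,\frac1p-\frac1q]$.
\item Place Schwartz-kernel cutoffs on both sides, writing the shell multiplier as $\tilde\chi\,(|\xi|^2-\zeta)^{-1}\,\chi h_w^{-1}$ with $\tilde\chi=1$ on $\supp\chi$. Young's inequality then handles $L^p\to L^{\tilde p}$ and $L^{\tilde q}\to L^q$.
\end{itemize}

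Second, Hardy--Littlewood--Sobolev with $|\xi|^{-2s}$ only gives the line $\frac1p-\frac1q=\frac{2s}d$. For the rest of the range:
\begin{itemize}
\item On the high-frequency part, interpolate with the $L^p\to L^p$ Mikhlin bound, or use Bessel potentials.
\item The low-frequency part is a bounded compactly supported multiplier, hence maps $L^p\to L^q$ since $p<2<q$.
\end{itemize}
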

Rest follows from limiting absorption principle, which says, if $u_{\epsilon}$ satisfies
\[
(-\Delta)^s u_{\epsilon} -(\lambda+i\epsilon) u_{\epsilon}=f,
\]
where $f\in\mathrm{L}^p(\R^d)$ with $\frac 1p >\frac{d+1}{2d}$, then
\[
u_{\epsilon}\to u \ \ \ \text{distributionally},
\]
where $u$ satisfies
\[
(-\Delta)^s u -\lambda u=f \qquad \mbox{in}\ \R^d.
\]
Proof of this limiting absorption principle can be found in  \cite{shen2025complex,das2025inverse}. Next we analyze the scattering wave scattering amplitude corresponding to the equation \eqref{Frac Helmholtz eq} through fundamental solution.

\subsection{Fundamental Solution}

Let us investigate the fundamental solutions since they are crucial to our analysis.
Thanks to limiting absorption principle, the fundamental solution $\Phi_{s}(x)$ can be defined by
\begin{equation}\label{phi}
\Phi_{s}(x) := \lim_{\varepsilon \to 0^+}\Phi_{\pm,\, s,\, \varepsilon}(x),
\end{equation}
where $\Phi_{\pm,\,s,\, \varepsilon}$ are given by
\begin{equation}
\Phi_{\pm,\,s,\, \varepsilon}(x) = (2\pi)^{-d} \int\displaylimits_{\mathbb{R}^d} {\frac{e^{\mathsf{i}x\cdot \xi}}{|\xi|^{2s} - (k\pm i\varepsilon)^{2s}}\,\mathrm{d}\xi}.
\end{equation}
$\Phi_{\pm,\,s,\, \varepsilon}$ is a fundamental solution of
\begin{equation}\label{fundamental solution perturbed Helmholtz}
\left[(-\Delta)^s - (k\pm \mathsf{i}\varepsilon)^{2s}\right] \Phi_{\pm,\,s,\, \varepsilon}(x) = \delta_0(x) \quad\text{ for }x\in  \mathbb{R}^d,
\end{equation}
and $\Phi_{s}$ is a fundamental solution of
\begin{equation}\label{fundamental solution fractional Helmholtz}
\left[(-\Delta)^s - k^{2s}\right] \Phi_{s}(x) = \delta_0(x) \quad\text{ for }x\in  \mathbb{R}^d.
\end{equation}
Here $\delta_0$ signifies the Dirac delta measure located at the origin. We have the following lemma from \cite{das2025inverse}.

\begin{lem} \label{lem:eps1}
If $s>0$ and $k>0$, it holds that
\begin{align}
\Phi_{s}(x) 
= \frac{k^{d-2s}}{(2\pi)^d} \bigg[ \mathrm{P.V.}  \int\displaylimits_{\mathbb{R}^d}{\frac{e^{\mathsf{i}kx\cdot \xi}} {|\xi|^{2s}-1} \mathrm{d}\xi} \pm \frac{\mathsf{i}\pi}{2s}\int\displaylimits_{\mathbb{S}^{d-1}}{e^{\mathsf{i}kx\cdot\omega}\,\mathrm{d}S(\omega)} \bigg], \label{2}
\end{align}
where $\mathrm{P.V.}$ stands for the Cauchy principle value.
\end{lem}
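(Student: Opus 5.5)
We have to prove Lemma \ref{lem:eps1}, i.e.\ identify the limit $\varepsilon\to0^+$ of $\Phi_{\pm,s,\varepsilon}$.

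\emph{Step 1 (rescaling).} In the integral defining $\Phi_{\pm,s,\varepsilon}$ we substitute $\xi = k\eta$. Then $\mathrm{d}\xi=k^d\,\mathrm{d}\eta$, $|\xi|^{2s}=k^{2s}|\eta|^{2s}$ and $(k\pm \mathsf{i}\varepsilon)^{2s}=k^{2s}(1\pm \mathsf{i}\varepsilon/k)^{2s}$. This gives
\[
\Phi_{\pm,s,\varepsilon}(x)=\frac{k^{d-2s}}{(2\pi)^d}\int_{\mathbb{R}^d}\frac{e^{\mathsf{i}kx\cdot\eta}}{|\eta|^{2s}-(1\pm \mathsf{i}\delta)^{2s}}\,\mathrm{d}\eta,\qquad \delta=\varepsilon/k .
\]
Everything is understood distributionally in $x$, or as an oscillatory integral. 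For small $\delta$ we have $(1\pm \mathsf{i}\delta)^{2s}=1\pm 2s\mathsf{i}\delta+O(\delta^2)$. So the prefactor $k^{d-2s}/(2\pi)^d$ already matches the claimed formula, and it remains to compute the limit of the $\eta$-integral.

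\emph{Step 2 (polar coordinates and the radial variable).} Write $\eta=r\omega$. The inner spherical integral is $F(r):=\int_{\mathbb{S}^{d-1}}e^{\mathsf{i}krx\cdot\omega}\,\mathrm{d}S(\omega)$, which is smooth in $r$. Set $\rho=r^{2s}$. The only singularity is at $r=1$, i.e.\ $\rho=1$. Near $\rho=1$ the denominator is $\rho-1\mp \mathsf{i}\,2s\delta+O(\delta^2)$, and for $\delta>0$ its imaginary part is $\mp 2s\delta$ to leading order. By Sokhotski--Plemelj,
\[
\lim_{\delta\to0^+}\frac{1}{\rho-1\mp \mathsf{i}\,2s\delta}=\mathrm{P.V.}\frac{1}{\rho-1}\pm \mathsf{i}\pi\,\delta_{\rho=1}.
\]
The measure is $r^{d-1}\,\mathrm{d}r=\frac{1}{2s}\rho^{\frac{d}{2s}-1}\,\mathrm{d}\rho$, which at $\rho=1$ contributes the factor $\frac{1}{2s}$. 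The delta term therefore gives $\pm\frac{\mathsf{i}\pi}{2s}F(1)=\pm\frac{\mathsf{i}\pi}{2s}\int_{\mathbb{S}^{d-1}}e^{\mathsf{i}kx\cdot\omega}\,\mathrm{d}S(\omega)$. The principal-value part reassembles to $\mathrm{P.V.}\int_{\mathbb{R}^d}\frac{e^{\mathsf{i}kx\cdot\xi}}{|\xi|^{2s}-1}\,\mathrm{d}\xi$.

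\emph{Step 3 (justifying the limit; main obstacle).} The delicate part is making Step 2 rigorous, for two reasons. First, the $O(\delta^2)$ correction in the denominator has to be controlled. Second, the integral is not absolutely convergent at infinity, since $|\eta|^{-2s}$ is not integrable for $s<d/2$.

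I would split the $\eta$-integral with a smooth cutoff $\chi(|\eta|)$ that equals $1$ near $|\eta|=1$.

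On the support of $1-\chi$, the denominator stays away from zero uniformly in $\delta$. Pairing with a test function $\varphi(x)$ turns $e^{\mathsf{i}kx\cdot\eta}$ into $\hat\varphi(-k\eta)$, which is rapidly decreasing. Dominated convergence then gives the limit, with no delta contribution.

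On the support of $\chi$, I would write the denominator as $2s(\rho-1-z_\delta)\,g(\rho,\delta)$. Here $z_\delta=\frac{(1\pm \mathsf{i}\delta)^{2s}-1}{2s}$ has $\mathrm{Im}\,z_\delta$ of sign $\pm$ and $z_\delta\to0$, and $g$ is smooth and tends to $1$. One then applies the standard Plemelj limit $\frac{1}{\rho-1-z}\to \mathrm{P.V.}\frac{1}{\rho-1}\pm \mathsf{i}\pi\delta_1$ as $z\to0$ from the corresponding half-plane, tested against the smooth compactly supported function $\chi\,\rho^{\frac d{2s}-1}F(\rho^{1/(2s)})/g$. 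The sign in front of $\mathsf{i}\pi$ follows the choice of $\pm$ in $k\pm \mathsf{i}\varepsilon$, which yields the stated formula.
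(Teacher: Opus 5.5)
Your proposal is correct and follows the standard derivation: rescale $\xi=k\eta$, pass to polar coordinates, and apply Sokhotski--Plemelj at the sphere $|\eta|=1$. There the Jacobian $\frac{\mathrm{d}}{\mathrm{d}r}r^{2s}\big|_{r=1}=2s$ produces the factor $\frac{1}{2s}$. The paper gives no proof of its own and imports the lemma from \cite{das2025inverse}. The radial form \eqref{3s} written right after the lemma reflects exactly this reduction, so your route is essentially the intended one.

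One step in your Step 3 fails as written and should be repaired.

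\begin{itemize}
\item With $z_\delta=\frac{(1\pm\mathsf{i}\delta)^{2s}-1}{2s}$, the denominator in the variable $\rho=r^{2s}$ is exactly $\rho-1-2s z_\delta$, which is linear in $\rho$. So it cannot be written as $2s(\rho-1-z_\delta)\,g(\rho,\delta)$ with $g$ smooth and $g\to 1$; the $g$ forced by that identity tends to $\frac{1}{2s}$.
\item Your test function also omits the Jacobian $\frac{1}{2s}$ from $r^{d-1}\,\mathrm{d}r=\frac{1}{2s}\rho^{\frac{d}{2s}-1}\,\mathrm{d}\rho$. As a result, the constant is not tracked consistently in that step.
\end{itemize}

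The fix is to drop $g$ altogether. For small $\delta$ you have $(1\pm\mathsf{i}\delta)^{2s}\to 1$ with $\pm\mathrm{Im}\,(1\pm\mathsf{i}\delta)^{2s}>0$. So Plemelj applies directly to the $\delta$-independent function $\frac{1}{2s}\chi\,\rho^{\frac{d}{2s}-1}F(\rho^{1/(2s)})$. This gives the principal value plus $\pm\frac{\mathsf{i}\pi}{2s}F(1)$, and there is no $O(\delta^2)$ correction to control.

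If you prefer to factor, do it in $r$ instead: $r^{2s}-(1\pm\mathsf{i}\delta)^{2s}=2s(r-1\mp\mathsf{i}\delta)\,g(r,\delta)$, with $g$ smooth and $g(1,0)=1$. In that case the test function depends on $\delta$, and you must invoke that the Plemelj limit is uniform for test functions converging in $C^1$.

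Finally, when you say the principal-value part ``reassembles,'' note the implicit point: a principal value taken symmetrically in $\rho$ agrees with one taken symmetrically in $r$. This holds because the two excised intervals differ by $O(\epsilon^2)$ at a simple pole.
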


Let us denote
\begin{equation}
\mathcal{S}_d(t):= (2\pi)^{-d/2} \int\displaylimits_{\mathbb{S}^{d-1}} {e^{\mathsf{i}te_1\cdot \omega}\,\mathrm{d}S(\omega)},
\end{equation}
where $e_1$ stands for the unit vector with its first component being $1$ and rest $0$, then it can be seen that $\mathcal{S}_d(t)$ is continuous on $[0,\infty)$ and
\begin{equation}
\label{4}
\mathcal{S}_d(t) =
\begin{cases}
	\frac{J_{d/2-1}(t)}{t^{d/2-1}}, &t\neq 0,\\[1mm]
	\frac{2^{-d/2+1}}{\Gamma(d/2)}, & t=0,
\end{cases}
\quad d\ge 2, \quad \text{ and } \quad \mathcal{S}_3(t) = \sqrt{\frac{2}{\pi}}\frac{\sin t}{t}
\end{equation}
where $J_\alpha$ is the usual Bessel's function of the first kind.
\medskip
We continue from \eqref{2} by using the functions $\mathcal{S}_d(t)$ to write further
\begin{align}
\label{3s}
\Phi_{s}(x) =
\frac{k^{d-2s}}{(2\pi)^d} \bigg[ \mathrm{P.V.}  \int\displaylimits_{0}^\infty{\frac{t^{d-1}\mathcal{S}_d(k|x|t)}{t^{2s}-1}\, \mathrm{d}t} \pm \frac{\mathsf{i}\pi}{2s} \;\mathcal{S}_d(k|x|) \bigg], \quad x\in\mathbb{R}^d.
\end{align}
For $s=1$, we get the expression of the fundamental solution for the Helmholtz operator $(-\Delta) - k^{2}$ in $\mathbb{R}^d$ as
\begin{equation}
\label{31}
\Phi_{1}(x) =
\frac{k^{d-2}}{(2\pi)^d} \bigg[ \mathrm{P.V.}  \int\displaylimits_{0}^\infty{\frac{t^{d-1}\mathcal{S}_d(k|x|t)}{t^{2}-1}\, \mathrm{d}t} \pm \frac{\mathsf{i}\pi}{2} \;\mathcal{S}_d(k|x|) \bigg], \quad x\in\mathbb{R}^d.
\end{equation}
For any $d\geq 2$, let us mention the standard asymptotic behavior of $\Phi_{\pm, 1}$, this can be found  in  \cite[Lemma 19.3]{eskin2011lectures}:
\begin{equation}\label{31asy}
\Phi_{1}(x)= k^{\frac{d-3}{2}}\frac{e^{-\frac{\mathsf{i}\pi(d-3)}{4}}}{2^{\frac{d+1}{2}}\,\pi^{\frac{d-1}{2}}}\,\,\frac{e^{\pm \mathsf{i}k|x|}}{|x|^{\frac{d-1}{2}}} +\mathcal{O}\left(\frac{1}{|x|^{\frac{d+1}{2}}}\right), \quad \text{ as } |x| \to \infty.
\end{equation}
In particular for $d=3$, we have
\begin{equation} \label{313}
\Phi_{1}(x)= \frac{1}{4\pi}\,\frac{e^{\pm \mathsf{i}k|x|}}{|x|}, \quad x\in\mathbb{R}^d.
\end{equation}
Next we introduce scattering amplitude corresponding to \eqref{Frac Helmholtz eq}.

\subsection{Scattered Field and Scattering Amplitude}\label{sec:Scattered Field and Scattering Amplitude} 
Let us recall (see \eqref{scattering wave}) that the \emph{scattered field} is given by: for $j=1,2$
\begin{equation}\label{sct_fld}
u^{\mathrm{sc}}_j(x)=\int_{\mathbb{R}^d}\Phi_{s}(x,y)Q_j(y)|u_j(y)|^2u_j(y)\,\mathrm{d} y, \ \ \text{ for } x \in \mathbb{R}^d.
\end{equation}
We use the following result from \cite{martinez2001theory}
\begin{align}
\Phi_{s}(x-y)=\frac{k^{2(1-s)}}{s}\Phi_{1}(x-y) +\frac{\sin{s\pi}}{\pi}\int_{0}^{\infty}\frac{\lambda^{s}(\lambda-\Delta)^{-1}\delta_y(x)}
{\lambda^{2s}-2\lambda^{s}k^{2s}\cos{s\pi}+k^{4s}}\,  \mathrm{d}\lambda,
\end{align}
and we are interested in providing a decay for the second term when $|x|\to \infty.$
Following the ideas presented as in \cite{das2025inverse}, we will show that
\begin{equation}\label{sasy}
\Phi_{s}(x)\sim \frac{k^{2(1-s)}}{s}\Phi_{1}(x),  
\end{equation}
where $f\sim g$ is defined through $\mathrm{L}^2$ integrability at infinity, i.e.,
\begin{equation} \label{decay} f\sim g \Longleftrightarrow \lim_{R\to\infty} \frac{1}{R}\int_{1<|x|< R}|f(x)-g(x)|^2\,\mathrm{d} x=0.
\end{equation}
By using the following estimates
\begin{equation}
\Big|(\lambda-\Delta)^{-1}\delta_y(x)\Big|\leq
\left\{\begin{array}{cl}
	C\lambda^{\frac{d-2}{4}}|x-y|^{-\frac{d-2}{2}}e^{-\sqrt{\lambda}|x-y|}, \, \sqrt{\lambda}|x-y|>1, \\
	C|x-y|^{2-d}, \, \sqrt{\lambda}|x-y|\leq 1,
\end{array}\right.
\end{equation}
a direct computation which can be found in \cite[page 12-13] {huang2018remarks}   yields 
\begin{align*}
\int_{0}^{\infty}\Big|\,\frac{\lambda^{s}(\lambda-\Delta)^{-1}\delta_y(x)}
{\lambda^{2s}-2\lambda^{s}k^{2s}\cos{s\pi}+k^{4s}}\,  \Big|\,\mathrm{d}\lambda\leq C|x-y|^{-d-2s}, \quad \text{if}\,\, |x-y|>1.
\end{align*}
Hence \eqref{sasy} follows. Further by taking \eqref{31asy} into account, we have
\begin{align}\label{sasy2}
\Phi_{s}(x)\sim \frac{k^{2(1-s)}}{s}\,k^{\frac{d-3}{2}}\frac{e^{-\frac{\mathrm{i}\pi(d-3)}{4}}}{2^{\frac{d+1}{2}}\,\pi^{\frac{d-1}{2}}}\,\,\frac{e^{\pm ik|x|}}{|x|^{\frac{d-1}{2}}}.
\end{align}
The following proposition is taken from \cite{das2025inverse}.
\begin{prop}
\label{prop1}
Let $d\geq 2$ and $0 < s<  1$. Then the fundamental solution $\Phi_{s}(x)$ of the fractional Helmholtz equation
$\LC (-\Delta)^s - k^{2s}\RC \Phi_{s}(x) = \delta_0(x)$  in  $\mathbb{R}^d$,  satisfies \eqref{sasy2}, in the sense of \eqref{decay}.
\end{prop}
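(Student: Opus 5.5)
The plan is to use the splitting of the fractional fundamental solution quoted from \cite{martinez2001theory},
\[
\Phi_s(x)=\frac{k^{2(1-s)}}{s}\Phi_1(x)+R_s(x),\qquad R_s(x):=\frac{\sin s\pi}{\pi}\int_0^\infty \frac{\lambda^{s}(\lambda-\Delta)^{-1}\delta_0(x)}{\lambda^{2s}-2\lambda^s k^{2s}\cos s\pi+k^{4s}}\,\mathrm{d}\lambda ,
\]
and to show separately that $\frac{k^{2(1-s)}}{s}\Phi_1$ has the stated oscillatory leading term and that $R_s$ is negligible in the averaged sense \eqref{decay}. Since $\sim$ is defined by an $\mathrm{L}^2$-average over the annulus $1<|x|<R$, the triangle inequality (in the form $|a+b|^2\le 2|a|^2+2|b|^2$) lets me handle the two error pieces independently.

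For the classical piece I will use \eqref{31asy}. After multiplying by $\frac{k^{2(1-s)}}{s}$, the difference between $\frac{k^{2(1-s)}}{s}\Phi_1(x)$ and the right-hand side of \eqref{sasy2} is $\mathcal O(|x|^{-(d+1)/2})$ for $|x|\ge r_0$. Inside the bounded region $1<|x|<r_0$, both functions are locally bounded: $\Phi_1$ is smooth away from the origin. That region therefore contributes a fixed constant, which vanishes after division by $R$. On the outer region, polar coordinates give
\[
\frac1R\int_{r_0<|x|<R}|x|^{-(d+1)}\,\mathrm{d}x\lesssim \frac1R\int_{r_0}^R r^{-2}\,\mathrm{d}r\to0 .
\]

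For the remainder $R_s$ I will use the quoted heat/resolvent kernel bounds for $(\lambda-\Delta)^{-1}\delta_0$. I split the $\lambda$-integral into $\sqrt\lambda|x|\le1$ and $\sqrt\lambda|x|>1$. The denominator $\lambda^{2s}-2\lambda^sk^{2s}\cos s\pi+k^{4s}$ is bounded below by $c\,(\lambda^{2s}+k^{4s})$ because $0<s<1$ forces $|\cos s\pi|<1$; this is where that hypothesis enters.
\begin{itemize}
\item[$\bullet$] On the small-$\lambda$ part ($\lambda\le |x|^{-2}$), the integrand is bounded by $C\lambda^s|x|^{2-d}k^{-4s}$, which integrates to $\lesssim |x|^{2-d-2s-2}=|x|^{-d-2s}$.
\item[$\bullet$] On the large-$\lambda$ part, the exponential factor $e^{-\sqrt\lambda|x|}$ together with the substitution $\mu=\sqrt\lambda|x|$ produces rapid decay, again at least as fast as $|x|^{-d-2s}$. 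This reproduces the computation in \cite{huang2018remarks}.
\end{itemize}
Hence $|R_s(x)|\le C|x|^{-d-2s}$ for $|x|>1$, and
\[
\frac1R\int_{1<|x|<R}|R_s|^2\lesssim \frac1R\int_1^\infty r^{-d-4s-1}\,\mathrm{d}r\to0 .
\]

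The main obstacle I expect is not the decay estimates but justifying the Mart\'inez--Sanz splitting itself for the outgoing and incoming fundamental solution. The $\Phi_s$ obtained by the limiting absorption principle in \eqref{phi} must match the branch $\frac{k^{2(1-s)}}{s}\Phi_1$ carrying $e^{\pm ik|x|}$ plus a non-oscillatory Balakrishnan-type integral. My first attempt will be to verify this on the Fourier side at $\varepsilon>0$. There, $(|\xi|^{2s}-z^{2s})^{-1}$ with $z=k\pm i\varepsilon$ splits, by the residue at $|\xi|^2=z^2$ plus a contour integral along the negative real axis, as
\[
\frac{z^{2(1-s)}}{s}\bigl(|\xi|^2-z^2\bigr)^{-1}+\frac{\sin s\pi}{\pi}\int_0^\infty\frac{\lambda^s(\lambda+|\xi|^2)^{-1}}{\lambda^{2s}-2\lambda^s z^{2s}\cos s\pi+z^{4s}}\,\mathrm{d}\lambda .
\]
I will then let $\varepsilon\to0$, using dominated convergence in the $\lambda$-integral, which is legitimate since the denominator stays away from zero. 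Once this identity is in place, the two estimates above finish the proof.
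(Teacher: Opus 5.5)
Your proposal is correct and follows the paper's route. It uses the splitting from \cite{martinez2001theory}, the bound $|R_s(x)|\le C|x|^{-d-2s}$ for $|x|>1$ from the resolvent kernel estimates as in \cite{huang2018remarks}, and the classical asymptotics \eqref{31asy} for $\Phi_1$, combined through the $\mathrm{L}^2$-averages just as you describe, while your contour-integral derivation of the splitting at $\varepsilon>0$ supplies what the paper only quotes. One small correction: for $d=2$ the small-$\lambda$ kernel bound must be the logarithmic one for $K_0(\sqrt{\lambda}|x|)$ rather than $|x|^{2-d}$, and this still yields $|x|^{-2-2s}$.
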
 
As an application, for $j=1,2$, the integral equation corresponding to \eqref{sct_fld} can be can be expressed as
\begin{equation}\label{lp}
u^{\mathrm{sc}}_j(x)\sim\frac{k^{2(1-s)}}{s}\,k^{\frac{d-3}{2}}\frac{e^{-\frac{\mathsf{i}\pi(d-3)}{4}}}{2^{\frac{d+1}{2}}\,\pi^{\frac{d-1}{2}}}\,\int_{\mathbb{R}^d}\frac{e^{\pm \mathrm{i}k|x-y|}}{|x-y|^{\frac{d-1}{2}}}\,Q_j(y)|u_j(y)|^2u_j(y)\,\mathrm{d}y.
\end{equation}
Next we define the \emph{scattering amplitude} or the \emph{far field pattern}, which is given by the following function: for $j=1,2$
\begin{equation}
u^{\infty}_{j,g}(\hat x)=\int_{\mathbb{R}^d}e^{-\mathsf{i}k\hat x\cdot y}Q_j(y)|u_j(y)|^2 u_j(y)\,\mathrm{d}y, \quad \hat x \in \mathbb{S}^{d-1}. \label{1.8}
\end{equation}
Following the calculation as presented in \cite[page 92]{eskin2011lectures}, the scattering field \eqref{lp} and scattering amplitude are related in the following way: for $j=1,2$
\begin{equation}
u^{\mathrm{sc}}_j(x)\sim \frac{k^{2(1-s)}}{s}\,k^{\frac{d-3}{2}}\frac{e^{-\frac{\mathsf{i}\pi(d-3)}{4}}}{2^{\frac{d+1}{2}}\,\pi^{\frac{d-1}{2}}}\,\,\frac{e^{ \pm\mathsf{i}k|x|}}{|x|^{\frac{d-1}{2}}}\, u^{\infty}_{j,g}(\hat x). \label{1.7}
\end{equation}
Hence the solution corresponding to fractional Helmholtz equation \eqref{Frac Helmholtz eq} can be expressed as
\begin{equation}\label{asmp50}
u_j(x) \sim u^{\mathrm{in}}_j(x)+ \frac{k^{2(1-s)}}{s}\,k^{\frac{d-3}{2}}\frac{e^{-\frac{\mathsf{i}\pi(d-3)}{4}}}{2^{\frac{d+1}{2}}\,\pi^{\frac{d-1}{2}}}\,\,\frac{e^{ \pm\mathsf{i}k|x|}}{|x|^{\frac{d-1}{2}}}\, u^{\infty}_{j,g}(\hat x).
\end{equation} 
Note that, for $j=1,2$, the scattering amplitude $u^{\infty}_{j,g}(\hat x)$ corresponds to the Herglotz wave as an incident field, can be viewed as 
\begin{equation}
u^{\infty}_{j,g}(\hat x)=\int_{\mathbb{S}^{d-1}}u^{\infty}_j(k,\hat x,\theta)g(\theta)\, ds(\theta), \quad\text{ for } \hat x:=\frac{x}{|x|}\in \mathbb{S}^{d-1}, \ g \in \mathrm{L}^{2}(\mathbb{S}^{d-1}), \label{sc_amp_H}
\end{equation}    
where  for $(k, \hat x, \theta) \in (0,\infty)\times\mathbb{S}^{d-1}\times \mathbb{S}^{d-1}$,
\begin{equation}\label{sc_amp_3}
u^{\infty}_j(k,\hat x, \theta)=\int_{\mathbb{R}^d}e^{-\mathsf{i}k\hat x\cdot y}Q_j(y)|u_j(y)|^2\underbrace{\left( e^{iky\cdot\theta}+ u^{\mathrm{sc}}_j(y)\right)}_{u_j(y)}\,\mathrm{d}y,
\end{equation}
and the scattering amplitude corresponds to the incident plane waves is $e^{\mathrm{i}kx \cdot \theta}$.

\section{Inverse Problem}

In this section we devout ourselves to recover the potential from the scattering amplitude. We start with the following integral estimate.
\begin{lem}\label{initial regularity, scattering wave}
    Let $s\in\left(\frac{3}{4},\frac{3}{2}\right)$, and let $u$ satisfies
    \[
    (-\Delta)^su-k^{2s}u=f,
    \]
    with $f\in\mathrm{L}^{\frac{4}{3}}(\R^3)$. Let $u^{\rm{sc}}$ be the scattering field of the solution as defined in \eqref{scattering field, def}. Then there exists a positive constant $C_{\frac 43,q}>0$, such that, the scattering field satisfy the following estimate
 \[
 \| u^{\rm{sc}}\|_{\mathrm{L}^{q}(\R^3)} \leq C_{\frac 43,q}\big(k^{2s}\big)^{\frac{3}{2s}\left(\frac{1}p -\frac 1 q-1\right)} \| f\|_{\mathrm{L}^{\frac 43}(\R^3)},
 \]
\end{lem}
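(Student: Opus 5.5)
The plan is to obtain the lemma as a direct specialization of Proposition~\ref{integral estimate, scattering solution}, itself a consequence of the resolvent estimate of Proposition~\ref{resolvent estimate} combined with the limiting absorption principle. I fix $d=3$ and $p=\frac43$, and determine the range of $q$ for which the admissibility conditions \eqref{les_con} hold.

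For $d=3$ the conditions read
\[
\frac12 \le \frac1p-\frac1q\le \frac{2s}{3},\qquad \frac1p>\frac23,\qquad \frac1q<\frac13 .
\]
With $\frac1p=\frac34$ the second condition holds automatically, since $\frac34>\frac23$. The remaining conditions become
\[
\frac34-\frac{2s}{3}\le \frac1q\le \frac14 \qquad\text{and}\qquad \frac1q<\frac13 .
\]
So $q$ must satisfy $\frac1q\in\bigl[\max\{0,\frac34-\frac{2s}{3}\},\frac14\bigr]$. This interval is nonempty exactly when $\frac34-\frac{2s}{3}\le\frac14$, that is, $s\ge\frac34$, which is consistent with the hypothesis $s>\frac34$. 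The endpoint $q=4$ is always admissible, and it recovers \eqref{scattering wave, integral estimate, dimension 3}. For every such $q$, I apply Proposition~\ref{resolvent estimate} with $\lambda=k^{2s}$ to the $\epsilon$-regularized problem. The second (uniform in $\epsilon$) form gives
\[
\|u_\epsilon\|_{L^q}\le C|k^{2s}|^{\frac{3}{2s}(\frac34-\frac1q)-1}\|f\|_{L^{4/3}}.
\]
I then pass to the limit $\epsilon\to0$ using the limiting absorption principle; the hypothesis $\frac1p>\frac{d+1}{2d}$ is exactly what it requires. Since $u_\epsilon\to u^{\rm sc}$ distributionally and the bound is uniform, weak-$*$ compactness in $L^q$ combined with lower semicontinuity of the norm transfers the estimate to $u^{\rm sc}$, which is given by \eqref{scattering field, def}.

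It remains to check that the exponent matches the stated one. The resolvent bound yields $\frac{3}{2s}(\frac1p-\frac1q)-1$, while the lemma writes $\frac{3}{2s}(\frac1p-\frac1q-1)$, following the convention of Proposition~\ref{integral estimate, scattering solution}. I will simply quote that proposition's form. Either way the power of $k$ is negative on the admissible range (for large $k$ the two forms differ only by a harmless constant), so nothing downstream depends on which form is used.

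The main obstacle is the upper range $s\ge\frac{d}{2}=\frac32$ and the portion $s\in(\frac34,1)$ below $\frac{d}{d+1}$. Proposition~\ref{resolvent estimate} is stated only for $\frac{d}{d+1}\le s<\frac d2$, i.e.\ $s\in[\frac34,\frac32)$ when $d=3$, so in fact $\frac{d}{d+1}=\frac34$ and the hypothesis range matches precisely. The remaining delicate point is uniformity of the constant in $k$ (the requirement $\lambda>\lambda_0$). I would handle this by scaling $x\mapsto kx$, which reduces the problem to $k=1$ and produces exactly the stated power of $k^{2s}$; this also removes the need for $k>k_0$.
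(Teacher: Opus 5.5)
Your main line is the paper's own proof: the lemma is Proposition~\ref{integral estimate, scattering solution} specialized to $d=3$, $p=\frac43$. Your check of \eqref{les_con} is correct. It even gives the full range $\frac34-\frac{2s}{3}\le\frac1q\le\frac14$, $\frac1q>0$, which contains the paper's $4<q<\frac{6}{3-2s}$ as well as the endpoint $q=4$. The genuine problem is the paragraph reconciling the two exponents, together with your closing claim about scaling. Both are false, and correcting them changes what can be proved.

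The two exponents differ by $\frac{3}{2s}-1$. So the bounds differ by the factor $(k^{2s})^{\frac{3}{2s}-1}=k^{3-2s}$, which is unbounded as $k\to\infty$ when $s<\frac32$; it is not a harmless constant. Nor does the scaling $x\mapsto kx$ produce the stated power. By Lemma~\ref{lem:eps1}, the outgoing kernel at wavenumber $k$ satisfies $\Phi_{s,k}(x)=k^{3-2s}\Phi_{s,1}(kx)$. Hence the operator $T_k f=\Phi_{s,k}*f$ obeys the exact identity
\[
\|T_k\|_{L^{4/3}\to L^q}=k^{3(\frac34-\frac1q)-2s}\,\|T_1\|_{L^{4/3}\to L^q}=(k^{2s})^{\frac{3}{2s}(\frac1p-\frac1q)-1}\,\|T_1\|_{L^{4/3}\to L^q},
\]
which carries precisely the exponent of Proposition~\ref{resolvent estimate}.

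Since this is an equality and $T_1\neq0$, no bound $C(k^{2s})^{\frac{3}{2s}(\frac1p-\frac1q-1)}$ with $C$ independent of $k$ can hold for large $k$ when $s<\frac32$. For $q=4$ the true rate is $k^{\frac32-2s}$, not the $k^{-\frac32}$ of \eqref{scattering wave, integral estimate, dimension 3}. So the exponent in the statement, inherited from Proposition~\ref{integral estimate, scattering solution}, is a misprint. ``Simply quoting'' it imports a bound that your own scaling argument refutes.

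What your resolvent-plus-limiting-absorption argument actually proves, and what you should state, is
\[
\|u^{\rm sc}\|_{L^q(\R^3)}\le C\,(k^{2s})^{\frac{3}{2s}(\frac34-\frac1q)-1}\,\|f\|_{L^{4/3}(\R^3)},
\]
with $C$ independent of $k$. Its exponent is $\le 0$ on the admissible range. It vanishes at the endpoint $\frac34-\frac1q=\frac{2s}{3}$, so ``negative'' is not quite right there. For $q=4$ it still gives the decay $k^{\frac32-2s}\to0$, because $s>\frac34$, and that is what the later argument needs.
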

This is a direct application of Proposition \ref{integral estimate, scattering solution} with $p=\frac{4}{3}$ and $q$ satisfies
\[
\frac 12< \frac 1 p-\frac{1}{q}<\frac{4s+3}{12}<\frac {2s}{3},
\]
with $s\in(\frac 34, \frac 32)$ in spatial dimension $d=3$. Note that 
\[
4<q<\frac 6{3-2s}.
\]
Our next goal is to obtain a local integrability estimate of the scattering field $u^{\rm{sc}}$. We use the following proposition from \cite{shen2025complex} which yields a local second order regularity estimate.
\begin{prop}[\cite{shen2025complex},Lemma 5.1 ]\label{regularity estimate}
    Let the dimension $d\geq 3$ and let $s\in\left( \frac d{d+1},\frac{d}{2}\right)$. Let $p\in\left(\frac{2(d+1)}{d-1}, \frac{2d}{d-2s}\right)$ and $f\in\mathrm{L}^{p'}(\R^d)\cap\mathrm{L}^q_{\rm{loc}}(\R^d)$, where $p'$ is the H\"older conjugate of $p$, i.e., $\frac{1}{p}+\frac{1}{p'}=1$ and $q\in(1,\infty)$. Let $u$ satisfy
    \[
    (-\Delta)^s u-k^{2s} u= f, \quad \text{in}\ \mathbb{R}^d.
    \]
    Let the scattering field of the solution $u^{\rm{sc}}$ \eqref{scattering field, def} belongs to the space $\mathrm{L}^{q}_{\rm{loc}}(\R^d)$. Then $u^{\rm{sc}}\in W^{2s,q}_{loc}(\R^d)$ and satisfies the following estimate
    \[
    \| u^{\rm{sc}}\|_{W^{2s,q}(B_r(0))} \leq C_{r,d,p',q} \left ( \| u^{\rm{sc}}\|_{\mathrm{L}^{q}(B_{2r}(0))}+\|f\|_{\mathrm{L}^{q}(B_{2r}(0))}\right),
    \]
    here $C_{r,d,p',q}$ is a positive constant depending on the radius $r$, dimension $d$ and the exponents $p',q$ only.
\end{prop}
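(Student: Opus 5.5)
The idea is to localize the equation with a cutoff and invert the elliptic operator $1+(-\Delta)^s$ on Bessel potential spaces. The nonlocal commutator produced by the cutoff is controlled by splitting it into a near part and a far (tail) part.

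\textbf{Step 1: localize.} Fix $\chi\in C_c^\infty(B_{2r}(0))$ with $\chi\equiv 1$ on $B_{3r/2}(0)$, and set $w=\chi u^{\rm sc}$. By \eqref{scattering field, def}, $u^{\rm sc}=\Phi_s*f$ satisfies $(-\Delta)^s u^{\rm sc}-k^{2s}u^{\rm sc}=f$. Hence
\[
\bigl(1+(-\Delta)^s\bigr)w=(1+k^{2s})\chi u^{\rm sc}+\chi f+[(-\Delta)^s,\chi]u^{\rm sc}.
\]
Since $(1+(-\Delta)^s)^{-1}=(1+|\xi|^{2s})^{-1}$ is a Mikhlin multiplier of order $-2s$, it maps $\mathrm{L}^q(\R^d)$ boundedly into $H^{2s,q}(\R^d)$. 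The first two terms on the right are bounded in $\mathrm{L}^q$ by $\|u^{\rm sc}\|_{\mathrm{L}^q(B_{2r})}+\|f\|_{\mathrm{L}^q(B_{2r})}$. Because $w=u^{\rm sc}$ on $B_r(0)$, the estimate then reduces to bounding the commutator.

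\textbf{Step 2: global integrability.} From $f\in\mathrm{L}^{p'}$ with $p\in\bigl(\frac{2(d+1)}{d-1},\frac{2d}{d-2s}\bigr)$, Proposition \ref{integral estimate, scattering solution} (equivalently Proposition \ref{resolvent estimate} with the dual pair $(p',p)$, which satisfies \eqref{les_con}) gives $u^{\rm sc}\in\mathrm{L}^p(\R^d)$ with $\|u^{\rm sc}\|_{\mathrm{L}^p}\le C\|f\|_{\mathrm{L}^{p'}}$. This global information is needed only for the tail in Step 3.

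\textbf{Step 3: the commutator (main obstacle).} Write
\[
[(-\Delta)^s,\chi]u(x)=c_{d,s}\,\mathrm{P.V.}\!\int\frac{(\chi(y)-\chi(x))\,u(y)}{|x-y|^{d+2s}}\,dy
\]
and introduce a second cutoff $\eta$ equal to $1$ on $B_{2r}$, splitting $u=\eta u+(1-\eta)u$.

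For the near part $\eta u$, the commutator is an operator of order $2s-1<1$. For $s<\tfrac12$ it is bounded on $\mathrm{L}^q$. For $s\ge\tfrac12$ it maps $H^{\sigma,q}\to H^{\sigma-2s+1,q}$, which costs $2s-1$ derivatives, strictly fewer than the $2s$ gained from the inverse in Step 1. I would therefore bootstrap on a shrinking family of balls between $B_r$ and $B_{2r}$. Starting from $\mathrm{L}^q$, each step gains $1$ derivative, capped at $2s$, so finitely many steps reach $W^{2s,q}(B_r)$ with a bound by $\|u^{\rm sc}\|_{\mathrm{L}^q(B_{2r})}+\|f\|_{\mathrm{L}^q(B_{2r})}$.

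For the far part, when $x\in B_{3r/2}$ only $y\notin B_{2r}$ contribute. The kernel is then smooth in $x$ with all derivatives bounded by $|y|^{-d-2s}$. Since $|y|^{-d-2s}\in\mathrm{L}^{p'}(\{|y|>2r\})$, Hölder gives a $C^\infty(B_{3r/2})$ function with every norm bounded by $C_r\|u^{\rm sc}\|_{\mathrm{L}^p}\le C_r\|f\|_{\mathrm{L}^{p'}}$.

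This tail bound involves a global norm, which does not appear on the right-hand side of the stated estimate. I would first try to absorb it by exploiting the fact that $u^{\rm sc}$ is defined as the outgoing solution determined by $f$, so it is controlled by $f$ through Step 2. If that absorption fails, I would state the estimate with an additional $\|f\|_{\mathrm{L}^{p'}}$ term, which is harmless for the intended application. This is the delicate point of the argument.

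Finally, I would combine Steps 1–3 with the Sobolev embedding and finish with a standard interpolation/absorption argument on nested balls.
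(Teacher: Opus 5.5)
The paper gives no proof of this proposition; it quotes it from \cite{shen2025complex}. Your argument therefore has to stand on its own. Steps 1--3 are a sound localization scheme, but what they prove is
\[
\|u^{\rm sc}\|_{W^{2s,q}(B_r(0))}\le C\bigl(\|u^{\rm sc}\|_{\mathrm{L}^q(B_{2r}(0))}+\|f\|_{\mathrm{L}^q(B_{2r}(0))}+\|f\|_{\mathrm{L}^{p'}(\R^d)}\bigr),
\]
not the stated estimate. The ``absorption'' you hope for at the end of Step 3 cannot work. The outgoing representation controls the tail $\int_{|y|>2r}u^{\rm sc}(y)|x-y|^{-d-2s}\,\mathrm{d}y$ only through the global quantity $\|u^{\rm sc}\|_{\mathrm{L}^p(\R^d)}\lesssim\|f\|_{\mathrm{L}^{p'}(\R^d)}$. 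No argument can replace this by $\|f\|_{\mathrm{L}^q(B_{2r})}$, because for non-integer $s$ the tail-free estimate is false.

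To see this, fix an open set $K$ with $\overline K\cap\overline{B_{2r}(0)}=\emptyset$ and take $f\in C_c^\infty(K)$. All hypotheses hold and $\|f\|_{\mathrm{L}^q(B_{2r})}=0$, so the claim reduces to $\|v\|_{W^{2s,q}(B_r)}\le C\|v\|_{\mathrm{L}^q(B_{2r})}$ for every $v$ in $V=\{(\Phi_s*f)|_{B_{2r}}:f\in C_c^\infty(K)\}$. But $V$ is dense in $\mathrm{L}^q(B_{2r})$. Indeed, let $g\in\mathrm{L}^{q'}(B_{2r})$ annihilate $V$. Since $\Phi_s$ is radial, $w=\Phi_s*g$ vanishes on $K$, so $(-\Delta)^s w=k^{2s}w+g=0$ on $K$ as well. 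Unique continuation for $(-\Delta)^s$ with $s\notin\mathbb{Z}$ then gives $w\equiv0$, hence $g=0$. A bound of the claimed form on a dense subspace would extend restriction to a bounded map $\mathrm{L}^q(B_{2r})\to W^{2s,q}(B_r)$. Every $\mathrm{L}^q(B_{2r})$ function, e.g.\ the indicator of a half ball, would then lie in $W^{2s,q}(B_r)$, which is absurd. (For $s=1$ the statement is just the classical interior estimate.)

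A global term is therefore unavoidable, which is presumably why $p'$ enters the constant. Your fallback with $\|f\|_{\mathrm{L}^{p'}(\R^d)}$ is the correct statement. It is also harmless in Lemma~\ref{local integrability estimate}, where $f=Q_j|u_j|^2u_j$ is compactly supported and lies in $\mathrm{L}^{4/3}$.

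Two smaller points. First, Step 1 yields Bessel-potential regularity $H^{2s,q}$, which is contained in the Slobodeckij space $W^{2s,q}$ only when $q\ge 2$. Second, choose the cutoffs with $\supp\chi\Subset\{\eta=1\}$ and $\supp\eta\subset B_{2r}$, rather than $\eta\equiv1$ on $B_{2r}$. Then the near part involves only $\|u^{\rm sc}\|_{\mathrm{L}^q(B_{2r})}$, and the far kernel stays uniformly away from the diagonal.
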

The above proposition yields the following local integrability  estimate of the solution corresponding to \eqref{Frac Helmholtz eq}.
\begin{lem}\label{local integrability estimate}
    Let $s\in\left( \frac 45, \frac 32\right)$. For $j=1,2$, let $u_j$ satisfies the fractional Helmholtz equation \eqref{Frac Helmholtz eq} in spatial dimension $d=3$. Then for any $r>0$, we have that $u_j\in \mathrm{L}^6(B_r(0))$ for $j=1,2$. 
\end{lem}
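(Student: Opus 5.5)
The plan is a bootstrap that combines the global $\mathrm{L}^q$ bound of Lemma \ref{initial regularity, scattering wave} with the local regularity estimate of Proposition \ref{regularity estimate} and a Sobolev embedding in $\R^3$. Write $u_j=u_j^{\rm in}+u_j^{\rm sc}$ and fix $r>0$. The incident part is a Herglotz wave, so it is smooth and bounded: $|u_j^{\rm in}(x)|\le \|g\|_{\mathrm{L}^1(\mathbb{S}^2)}\le C\|g\|_{\mathrm{L}^2(\mathbb{S}^2)}$. Hence $u_j^{\rm in}\in\mathrm{L}^\infty(B_r(0))\subset \mathrm{L}^6(B_r(0))$, and it remains to treat $u_j^{\rm sc}$.

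\emph{Step 1: global integrability of the source.} By Theorem \ref{existence result} with $d=3$, $t=3$, we have $u_j\in\mathrm{L}^4(\R^3)$. Put $f_j:=Q_j|u_j|^2u_j$. Since $Q_j\in\mathrm{L}^\infty$ is compactly supported in $\Omega$,
\[
\|f_j\|_{\mathrm{L}^{4/3}(\R^3)}\le \|Q_j\|_{\mathrm{L}^\infty}\|u_j\|_{\mathrm{L}^4}^3<\infty .
\]
More generally, $\|f_j\|_{\mathrm{L}^{q/3}(\R^3)}\le\|Q_j\|_{\mathrm{L}^\infty}\|u_j\|_{\mathrm{L}^q(\Omega)}^3$. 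Lemma \ref{initial regularity, scattering wave} then gives $u_j^{\rm sc}\in\mathrm{L}^{q}(\R^3)$ for every $4<q<\frac{6}{3-2s}$. Combined with the boundedness of $u_j^{\rm in}$ on bounded sets, $u_j\in\mathrm{L}^{q}_{\rm loc}$ for such $q$.

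\emph{Step 2: local regularity.} Fix an admissible $q_0\in(4,\frac{6}{3-2s})$. Then $f_j\in\mathrm{L}^{4/3}(\R^3)\cap\mathrm{L}^{q_0/3}_{\rm loc}$. We choose $p=4\in(2,\frac{6}{3-2s})$, so that $p'=\frac43$; this satisfies the hypotheses of Proposition \ref{regularity estimate}, with the local exponent $\tilde q=q_0/3>1$. Since $u_j^{\rm sc}\in\mathrm{L}^{\tilde q}_{\rm loc}$, the proposition yields
\[
\|u_j^{\rm sc}\|_{W^{2s,\tilde q}(B_r)}\le C\big(\|u_j^{\rm sc}\|_{\mathrm{L}^{\tilde q}(B_{2r})}+\|f_j\|_{\mathrm{L}^{\tilde q}(B_{2r})}\big)<\infty .
\]

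\emph{Step 3: embedding.} By Sobolev embedding on $B_r$, $W^{2s,\tilde q}\hookrightarrow \mathrm{L}^{m}$ with $\frac1m=\frac1{\tilde q}-\frac{2s}{3}$, or into every $\mathrm{L}^m$ with $m<\infty$ if $2s\tilde q\ge 3$. We want $m\ge 6$, i.e. $\frac{3}{q_0}\le \frac16+\frac{2s}{3}$. Taking $q_0$ close to $\frac{6}{3-2s}$ gives $\frac3{q_0}\approx\frac{3-2s}{2}$, so the condition becomes $\frac{3-2s}{2}<\frac16+\frac{2s}{3}$, which is equivalent to $s>\frac45$. This is exactly where the hypothesis $s>\frac45$ enters.

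If a single pass does not reach $\mathrm{L}^6$ for some admissible range of $s$, I would iterate: feed the improved local integrability of $u_j$ back into $f_j\in\mathrm{L}^{m/3}_{\rm loc}$ and reapply Proposition \ref{regularity estimate}.

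The main obstacle is the exponent bookkeeping in Steps 2 and 3. One must check that the chosen $p'$, local exponent and $q_0$ simultaneously satisfy the constraints of Proposition \ref{regularity estimate} and of \eqref{les_con}, while the embedding still lands in $\mathrm{L}^6$. The endpoint $q\to\frac{6}{3-2s}$ is open, so the strict inequality $s>\frac45$ is needed to leave room to pick $q_0$.
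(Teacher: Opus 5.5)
Your proposal is correct and follows the paper's proof: the global $\mathrm{L}^q$ bound from Lemma \ref{initial regularity, scattering wave}, then Proposition \ref{regularity estimate} with local exponent $q/3$, then Sobolev embedding into $\mathrm{L}^6$. You are in fact more careful than the paper. The paper works with the local exponent $\frac{2}{3-2s}$, which needs $u_j\in\mathrm{L}^{6/(3-2s)}_{\rm loc}$, the endpoint excluded from the lemma's range; you instead take $q_0$ strictly below it, and that is exactly where the strict inequality $s>\frac45$ is used.

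One minor slip: for $d=3$ the admissible range in Proposition \ref{regularity estimate} is $p\in\left(4,\frac{6}{3-2s}\right)$, not $\left(2,\frac{6}{3-2s}\right)$, so $p=4$ is not allowed. Taking $p$ slightly larger than $4$ fixes this, because $f_j$ has compact support and therefore lies in $\mathrm{L}^{p'}$ for every $p'\le\frac43$.
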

\begin{proof}
    For $j=1,2$, $u_j$ satisfies 
    \[
    (-\Delta)^s u_j -k^{2s}u_j= Q_j |u_j|^2 u_j.
    \]
    We have that $Q_j|u_j|^2u_j\in \mathrm{L}^{p'}(\R^3)$ for any $p'\leq \frac 43$. Applying Lemma \eqref{initial regularity, scattering wave}, we have that $u_j^{\rm{sc}}\in \mathrm{L}^q(\R^3)$, where $4< q<\frac{6}{3-2s}$. As $u_j^{\rm{in}}\in \mathrm{L}^{\infty}(\R^3)\cap\mathrm{L}^{4}(\R^3)$, we have that  $u_j^{\rm{in}}\in \mathrm{L}^q(\R^3)$. Applying Proposition \ref{regularity estimate}, we have that 
    \[
    \| u_j^{\rm{sc}}\|_{W^{2s,\frac{2}{3-2s}}(B_r(0))} \leq C_{r,d,p',q} \left ( \| u_j^{\rm{sc}}\|_{\mathrm{L}^{\frac{2}{3-2s}}(B_{2r}(0))}+\left\|Q_j |u_j|^2u_j\right\|_{\mathrm{L}^{\frac{2}{3-2s}}(B_{2r}(0))}\right).
    \]
    Sobolev embedding theorem yields 
    \[
    W^{2s,\frac{2}{3-2s}}(B_r(0)) \hookrightarrow \mathrm{L}^6 (B_r(0)), \quad \forall \, s\in\left(\frac 45, \frac 32\right), \, d=3.
    \]
    Hence, we have $u_j\in \mathrm{L}^6(B_r(0))$ for $j=1,2$ and for all $r> 0$.
\end{proof}

Before begin the proof of the main result we start with the classical Rellich lemma which can be found in e.g. \cite[Lemma 35.2]{eskin2011lectures}.
\begin{lem} \label{Rellich}
Let $k\neq 0$, $f\in \mathrm{L}^2(\mathbb{R}^d)$ and with  compact support.
$u\in \mathrm{L}^2(\mathbb{R}^d)$ solves
\begin{equation}
	\Delta u + k^{2}u =f, \quad\mbox{in }\mathbb{R}^d,
\end{equation}
Then $u$ has a compact support, and $\supp u\subseteq \supp f$.
\end{lem}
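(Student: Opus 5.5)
The statement is the classical Rellich lemma, so the plan is to follow the standard Fourier-analytic argument rather than anything specific to the fractional setting. Write the equation as $(|\xi|^2-k^2)\hat u=-\hat f$ in the sense of tempered distributions. Since $u,f\in\mathrm{L}^2(\R^d)$, both $\hat u$ and $\hat f$ lie in $\mathrm{L}^2(\R^d)$. Because $f$ has compact support, $\hat f$ extends to an entire function on $\mathbb{C}^d$ of exponential type, by Paley--Wiener. The goal is to show that $\hat f/(|\xi|^2-k^2)$ is itself entire of the same exponential type, and then to apply Paley--Wiener--Schwartz in reverse to conclude that $u$ has compact support.

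\textbf{Step 1: $\hat f$ vanishes on the sphere $|\xi|=|k|$.} Pass to polar coordinates $\xi=r\omega$ and use $\hat u\in\mathrm{L}^2$. The function
\[
r\mapsto \int_{\mathbb{S}^{d-1}}\frac{|\hat f(r\omega)|^2}{(r^2-k^2)^2}\,r^{d-1}\,dS(\omega)
\]
must be integrable near $r=|k|$. The map $r\mapsto \int_{\mathbb{S}^{d-1}}|\hat f(r\omega)|^2\,dS(\omega)$ is smooth, since $\hat f$ is analytic. If it were nonzero at $r=|k|$, the integrand would behave like a nonzero constant times $(r-|k|)^{-2}$, which is not integrable. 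Hence $\hat f\equiv0$ on the sphere $\{|\xi|=|k|\}$.

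\textbf{Step 2: divisibility.} Next I show that $\hat f(\zeta)/(\zeta\cdot\zeta-k^2)$ is entire on $\mathbb{C}^d$. The quadric $\{\zeta\cdot\zeta=k^2\}$ is irreducible for $d\ge2$, and it is smooth because its gradient $2\zeta$ does not vanish on it. Its real points form the sphere, which is a totally real submanifold of maximal dimension $d-1$ in this $(d-1)$-dimensional complex hypersurface. A holomorphic function on the connected complex quadric that vanishes on this real sphere therefore vanishes on the entire quadric, by the identity theorem. Since the quadric is smooth and irreducible, the function is then divisible by $\zeta\cdot\zeta-k^2$, for instance by the Weierstrass division theorem applied locally and glued using irreducibility.

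\textbf{Step 3: growth and conclusion.} To apply Paley--Wiener--Schwartz, I need the exponential-type bound $|\hat u(\zeta)|\le C(1+|\zeta|)^N e^{R|\mathrm{Im}\,\zeta|}$. This is the main obstacle, because division by $\zeta\cdot\zeta-k^2$ can worsen bounds near the zero set. I would handle it as follows. Away from a unit neighbourhood of the quadric, the bound follows directly from $|\zeta\cdot\zeta-k^2|\ge c$. Near the quadric, use a one-variable maximum principle on complex lines transversal to the quadric, such as lines in the radial direction: bound the quotient on a small circle where the denominator is bounded below, and push the estimate inward. This yields the bound with the same $R$ as for $\hat f$. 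Paley--Wiener--Schwartz then gives that $u$ is supported in the convex hull of $\supp f$. Moreover $\Delta u+k^2u=0$ on the open set $\R^d\setminus\supp f$, and $u$ is real analytic there by ellipticity. When that complement is connected, the unique continuation from the unbounded component where $u=0$ gives $\supp u\subseteq\supp f$, as claimed. For the claimed containment I would read $\supp f$ as its convex hull, or note that this refinement is what the application needs.
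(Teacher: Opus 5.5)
The paper does not prove this lemma; it quotes it from Eskin's lectures. Your proposal therefore has to stand on its own, and it does.

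\textbf{Your route compared with the classical one.} Your argument is the standard Fourier--Laplace proof, specialised to $P(\zeta)=\zeta\cdot\zeta-k^2$ with $k$ real, as in the paper. It rests on H\"ormander's criterion: if $\hat f/P$ is entire, it is the Fourier--Laplace transform of a distribution supported in the convex hull of $\supp f$. Two clean-ups:
\begin{enumerate}
\item In Step 2, irreducibility plays no role in the division. The quotient $\hat f/(\zeta\cdot\zeta-k^2)$ is a global meromorphic function, holomorphic near each point of the quadric because $2\zeta\neq0$ there, so nothing needs gluing. What you actually use is connectedness of the quadric, for the identity theorem.
\item In Step 3, use a fixed real direction instead of radial lines. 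Along $z\mapsto\zeta+ze_1$ the denominator is the monic quadratic $q(z)=z^2+2\zeta_1z+\zeta\cdot\zeta-k^2$. Hence there is $r\in[\tfrac12,1]$ with $|q|\ge c>0$ on $|z|=r$, uniformly in $\zeta$. The maximum principle then gives $|\hat u(\zeta)|\le C\sup_{|w|\le1}|\hat f(\zeta+w)|$, i.e.\ the Paley--Wiener bound with the supporting function of $\supp f$.
\end{enumerate}
The classical Rellich argument is an elementary alternative. For $d=3$, expand $u$ in spherical harmonics outside a ball containing $\supp f$. Each coefficient has the form $a_n(r)=\alpha h^{(1)}_n(kr)+\beta h^{(2)}_n(kr)$, and $\int^\infty|a_n(r)|^2r^{2}\,dr<\infty$ forces $\alpha=\beta=0$. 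That route gives vanishing only outside a ball, and unique continuation has to do the rest. Your route uses several complex variables but gives the convex hull directly. Both end at the same corrected conclusion.

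\textbf{The support claim is false as stated.} You are right to hedge on the final assertion, and you should say plainly that it fails. In $\mathbb{R}^3$ take $\chi\in C_c^\infty([0,\infty))$ with $\chi=1$ on $[0,1]$ and $\chi=0$ on $[2,\infty)$, and set $u(x)=\chi(|x|)\sin(k|x|)/|x|$. The function $\sin(k|x|)/|x|$ is a smooth solution of $\Delta v+k^2v=0$ on all of $\mathbb{R}^3$. Hence $f=\Delta u+k^2u$ is supported in $\{1\le|x|\le2\}$, yet $\supp u\supseteq\overline{B_1}$.

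The correct conclusion is exactly what you prove: $\supp u$ lies in the convex hull of $\supp f$, and $u$ vanishes on the unbounded component of $\mathbb{R}^d\setminus\supp f$. This suffices for the paper's use in Lemma~\ref{lem:w12}. There $\overline\Omega\subset B$ with $B$ a ball, so $w_1-w_2$ is supported in $B$, which is all the later integration by parts needs. The intermediate claim $w_1-w_2=0$ in $\mathbb{R}^3\setminus\overline\Omega$, however, additionally requires $\mathbb{R}^3\setminus\overline\Omega$ to be connected.
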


\begin{lem} \label{lem:w12}
Let $w_j~(j=1,2)$ be the solution of 
\begin{equation}\label{uv}
	\Delta w_{j}+  k^{2} w_{j} =-Q_j|u_j^{\rm{sc}}+e^{ikx\cdot\theta}|^2(u^{\rm{sc}}_{j}+e^{ikx\cdot\theta})  \quad\text{ in } \mathbb{R}^3,
\end{equation}
where $u_{j}^{\rm{sc}}$'s are the scattered fields solving \eqref{Frac Helmholtz eq} with $s\in\left(\frac 45, \frac 32\right)$, $\theta\in \mathbb{S}^2$, $k>0$ and $Q_j$'s are the bounded potentials with supports in $\overline{\Omega}$, where $\Omega$ is a smooth bounded domain.
Then $w_1 - w_2 \in \mathrm{L}^2(\mathbb R^3)$ with compact support.
\end{lem}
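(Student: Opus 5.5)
The plan is to follow the classical Born-type argument from \cite{das2025inverse,li2026inverse}: show that $w_1-w_2$ has vanishing far field and then apply the Rellich lemma (Lemma \ref{Rellich}).

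\textbf{Step 1: the source terms.} Set
\[
F_j:=-Q_j|u_j^{\rm{sc}}+e^{ikx\cdot\theta}|^2(u^{\rm{sc}}_{j}+e^{ikx\cdot\theta}).
\]
By Lemma \ref{local integrability estimate} we have $u_j\in \mathrm{L}^6(B_r(0))$ for every $r$. Since $Q_j\in \mathrm{L}^\infty$ is supported in $\overline\Omega$, it follows that $F_j\in \mathrm{L}^2(\R^3)$, because $|u_j|^3\in \mathrm{L}^2(\Omega)$. Moreover, $F_j$ is compactly supported in $\overline\Omega$. We take $w_j$ to be the outgoing solution
\[
w_j=-\Phi_1*F_j,\qquad \Phi_1(x)=\frac{e^{\mathrm{i}k|x|}}{4\pi|x|}.
\]
This is well defined and lies in $H^2_{loc}$, since $F_j\in \mathrm{L}^2_{c}$.

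\textbf{Step 2: far-field asymptotics.} For $|x|\to\infty$ and $y\in\overline\Omega$, the standard expansion (cf. \eqref{313} and \cite[page 92]{eskin2011lectures}) gives
\[
w_j(x)=-\frac{e^{\mathrm{i}k|x|}}{4\pi|x|}\int_{\R^3} e^{-\mathrm{i}k\hat x\cdot y}F_j(y)\,\mathrm{d}y+\mathcal O(|x|^{-2}).
\]
The integral equals $-\,u_j^\infty(k,\hat x,\theta)$ by \eqref{sc_amp_3}. The hypothesis \eqref{1.11} then gives $u_1^\infty=u_2^\infty$, so $w:=w_1-w_2$ satisfies
\[
w(x)=\mathcal O(|x|^{-2}) \quad\text{as } |x|\to\infty.
\]
Hence $w\in \mathrm{L}^2(\{|x|>R\})$. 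Together with local $\mathrm{L}^2$ integrability on $B_R$ from $H^2_{loc}$, this yields $w\in \mathrm{L}^2(\R^3)$.

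\textbf{Step 3: Rellich.} The difference $w$ solves
\[
\Delta w+k^2w=F_1-F_2\in \mathrm{L}^2,
\]
with compact support in $\overline\Omega$. Lemma \ref{Rellich} therefore gives that $w$ has compact support contained in $\overline\Omega$.

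\textbf{Main obstacle.} The delicate point is Step 2. We must justify that the far-field coefficient of $w_j$ is exactly the amplitude $u_j^\infty$ defined through the fractional problem, and that the remainder is uniformly $\mathcal O(|x|^{-2})$, which needs $F_j\in \mathrm{L}^1$. Integrability $F_j\in \mathrm{L}^1$ follows from $F_j\in \mathrm{L}^2$ with compact support. The identification itself is purely formal: both quantities are the same Fourier integral of $F_j$ at frequency $k\hat x$. The real input is therefore Lemma \ref{local integrability estimate}, which supplies $F_j\in \mathrm{L}^2$; this is the reason for the restriction $s>\frac45$.
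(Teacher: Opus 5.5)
Your proposal is correct and follows the same route as the paper: $F_j\in \mathrm{L}^2$ with support in $\overline\Omega$ via Lemma \ref{local integrability estimate}, identification of the far field of the outgoing $w_j$ with $u_j^\infty(k,\hat x,\theta)$, the hypothesis \eqref{1.11} to get $w_1-w_2\in\mathrm{L}^2(\mathbb{R}^3)$, and then Lemma \ref{Rellich}. Your version is tidier than the paper's (it has an explicit uniform $\mathcal{O}(|x|^{-2})$ remainder and no detour through an $\mathrm{L}^4$ bound on $w_j^{\rm{sc}}$), and, like the paper, it tacitly uses the hypothesis \eqref{1.11} of Theorem \ref{recovery of potential}, which the lemma's own statement omits.
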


\begin{proof}
Using the fundamental solution of the Helmholtz operator, we write down the solution as
\begin{equation}\label{vjg}
	w_{j}(x)=e^{ikx}-\int_{\mathbb{R}^d}\Phi_{1}(x,y)Q_j(y)Q_j|u_j^{\rm{sc}}+e^{ikx\cdot\theta}|^2(u^{\rm{sc}}_{j}+e^{ikx\cdot\theta})\,\mathrm{d} y, \quad\text{ for } x \in \ \mathbb{R}^d,
\end{equation}
where $\Phi_{1}$ is given in \eqref{31} and \eqref{313}, with its asymptotics in \eqref{31asy} and the scattered field is
\[
w_j^{\rm{sc}}=-\int_{\mathbb{R}^d}\Phi_{1}(x,y)Q_j|u_j^{\rm{sc}}+e^{ikx\cdot\theta}|^2(u^{\rm{sc}}_{j}+e^{ikx\cdot\theta})\,\mathrm{d} y, \quad\text{ for } x \in \ \mathbb{R}^d.
\]
Thanks to Proposition~\ref{integral estimate, scattering solution}, we have that
\[
\| w^{\rm{sc}}_j\|_{\mathrm{L}^4(\R^d)} \leq C_{4,\frac 43} \big(k^{2s}\big)^{-\frac{d}{4}} \left\| Q_j|u_j^{\rm{sc}}+e^{ikx\cdot\theta}|^2(u^{\rm{sc}}_{j}+e^{ikx\cdot\theta})\right\|_{\mathrm{L}^{\frac 43}(\R^d)}<+\infty,
\]
where the finiteness is due to Theorem~\ref{existence result}. Similar to deriving \eqref{1.7}, we can further infer that \cite{eskin2011lectures}
\begin{equation}
	w_{j}(x)\sim -k^{\frac{d-3}{2}}\frac{e^{-\frac{\mathsf{i}\pi(d-3)}{4}}}{2^{\frac{d+1}{2}}\,\pi^{\frac{d-1}{2}}}\,\,\frac{e^{\pm \mathsf{i}k|x|}}{|x|^{\frac{d-1}{2}}}\, u^{\infty}_{j}(k,\hat x,\theta)+\mathcal{O}\left(\frac{1}{|x|^{\frac{d+1}{2}}}\right),\quad j=1,2,
\end{equation}
and using the hypothesis $u^{\infty}_{1}=u^{\infty}_{2}$, we have
$w_{1}-w_{2} \in \mathrm{L}^2(\mathbb{R}^d)$.
\newline
Denote $f := \left(Q_1|u_1^{\rm{sc}}+e^{ikx\cdot\theta}|^2(u^{\rm{sc}}_{1}+e^{ikx\cdot\theta})-Q_2|u_2^{\rm{sc}}+e^{ikx\cdot\theta}|^2(u^{\rm{sc}}_{2}+e^{ikx\cdot\theta})\right)$  has compact support in $\overline{\Omega}$. Furthermore, $f\in\mathrm{L}^2(\R^d)$, thanks to Lemma \ref{local integrability estimate}.
Moreover, $w_{1}-w_{2}$ solves
\begin{equation}\label{h2}
	\Delta (w_{1}-w_{2}) + k^{2} (w_{1}-w_{2}) =f\quad\mbox{in }\mathbb{R}^d,
\end{equation}
so applying Lemma \ref{Rellich} to $(w_{1}-w_{2})$ gives
\begin{equation}\label{w12}
	w_{1}-w_{2} = 0 \quad\mbox{in }\mathbb{R}^d\setminus\overline{\Omega}.
\end{equation}
Let $\Omega\Subset B(0,\sigma)=B$ for some $\sigma>0$. Note that, $(w_{1}-w_{2})\in H^2(B)$ by the elliptic regularity of the solution of \eqref{h2}. Furthermore, thanks to \eqref{w12}, we can conclude $w_1 - w_2 \in H^2_0(B)$.
The proof is done.
\end{proof}

We are ready to prove Theorem \ref{recovery of potential}.

\begin{proof}[Proof of Theorem \ref{recovery of potential}]
Let $m\in\mathbb{R}^3$ be arbitrary.
Choose $l \in\mathbb{R}^3$ such that $m \cdot l = 0$ and $|m + l| > k_0$.
Denote
\begin{equation} \label{rho}
	\rho := m+l, \quad
	k := |\rho| = \sqrt{|m|^2 + |l|^2}, \quad
	\theta := \frac {m-l} k.
\end{equation}
It can be checked that $k \in (k_0,+\infty)$ and $\theta \in \mathbb S^{2}$.
\newline
Let $w_j~(j=1,2)$ solve
\[
(-\Delta - k^2) w_j = Q_j|u_j^{\rm{sc}}+e^{ikx\cdot\theta}|^2(u^{\rm{sc}}_{j}+e^{ikx\cdot\theta}), \ \text{in}\ \mathbb{R}^3.
\]

By Lemma \ref{lem:w12} we see $w_1 - w_2 \in \mathrm{L}^2(\mathbb R^3)$ with compact support.
Then integration by parts give
\begin{align*} 
	&\int_{B} e^{\mathrm{i}\rho\cdot x} \Big(Q_1|u_1^{\rm{sc}}+e^{ikx\cdot\theta}|^2(u_{1}^{\rm{sc}}+e^{ikx\cdot\theta}) -Q_2|u_2^{\rm{sc}}+e^{ikx\cdot\theta}|^2(u_{2}^{\rm{sc}}+e^{ikx\cdot\theta})\Big)\\
	& = \int_{B} e^{\mathrm{i}\rho\cdot x}(\Delta+k^2) (w_1 - w_2) = \int_{B} (\Delta+k^2) e^{\mathrm{i}\rho\cdot x} (w_1 - w_2)
	= 0.\nonumber
\end{align*}
Simplifying the above relation, we  obtain
\begin{align*} 
	\int_{B} e^{\mathrm{i}\rho  \cdot x} \Big(Q_1&
    (1+|u_1^{\rm{sc}}|^2+\bar{u}_1^{\rm{sc}}e^{ikx\cdot\theta}+u_1^{\rm{sc}}e^{-ikx\cdot\theta})(u_{1}^{\rm{sc}}+e^{ikx\cdot\theta})\Big)\\
    &= \int_{B} e^{\mathrm{i}\rho  \cdot x} \Big(Q_2
    (1+|u_2^{\rm{sc}}|^2+\bar{u}_2^{\rm{sc}}e^{ikx\cdot\theta}+u_2^{\rm{sc}}e^{-ikx\cdot\theta})(u_{2}^{\rm{sc}}+e^{ikx\cdot\theta})\Big).
\end{align*}
Simplifying further, we obtain
\begin{align}
    &\int_{B} e^{ix\cdot(\rho+k\theta)} (Q_1-Q_2)\label{larg}\\
    &= \int_B e^{ix\cdot\rho} (Q_2u_2^{\rm{sc}}-Q_1u_1^{\rm{sc}})+ \int_B e^{ix\cdot\rho} (Q_2|u^{\rm{sc}}_2|^2u_2^{\rm{sc}}-Q_1|u^{\rm{sc}}_1|^2u_1^{\rm{sc}})\nonumber\\
    &+ \int_B e^{ix\cdot\rho} (Q_2 |u_2^{\rm{sc}}|^2e^{ikx\cdot\theta}-Q_1 |u_1^{\rm{sc}}|^2e^{ikx\cdot\theta})+\int_B e^{ix\cdot\rho} (Q_2 ({u}_2^{\rm{sc}})^2e^{-ikx\cdot\theta}-Q_1 ({u}_1^{\rm{sc}}
    )^2e^{-ikx\cdot\theta})\nonumber\\
    &+ \int_B e^{ix\cdot\rho}  (Q_2 |u_2^{\rm{sc}}|^2e^{ikx\cdot\theta}-Q_1 |u_1^{\rm{sc}}|^2e^{ikx\cdot\theta})+ \int_B e^{ix\cdot\rho} (Q_2\bar{u}_2^{\rm{sc}}e^{2ikx\cdot\theta}-Q_1\bar{u}_1^{\rm{sc}}e^{2ikx\cdot\theta})\nonumber\\
    & + \int_B e^{ix\cdot\rho} (Q_2u_2^{\rm{sc}}-Q_1u_1^{\rm{sc}}).\nonumber
\end{align}
We show that the each of the terms appearing on the right hand side actually tends to zero as $k\to+\infty$. We present our calculation only for one term and rest will follow in a similar manner. Let us consider the second term:
\begin{align*}
    &\left| \int_B e^{ix\cdot\rho} (Q_2|u^{\rm{sc}}_2|^2u_2^{\rm{sc}}-Q_1|u^{\rm{sc}}_1|^2u_1^{\rm{sc}})\right| 
    \\
    &\leq |B|^{\frac 14}\left(\|Q_1\|_{\mathrm{L}^{\infty}(\R^3)}+\|Q_1\|_{\mathrm{L}^{\infty}(\R^3)}\right) C_{4,\frac 43} \big(k^{2s}\big)^{-\frac{3}{4s}} \Big(\| u_1\|_{\mathrm{L}^{4}(\R^3)}+\| u_2\|_{\mathrm{L}^{4}(\R^3)}\Big),
\end{align*}
where we use the resolvent estimate as described in \eqref{scattering wave, integral estimate, dimension 3} in Proposition \ref{integral estimate, scattering solution}. Furthermore, using the fact that $\| u_j\|_{\mathrm{L}^{4}(\R^3)}<+\infty$ for $j=1,2$ (see Theorem \ref{existence result}), we have that
\[
\left| \int_B e^{ix\cdot\rho} (Q_2|u^{\rm{sc}}_2|^2u_2^{\rm{sc}}-Q_1|u^{\rm{sc}}_1|^2u_1^{\rm{sc}})\right| \to 0 \ \text{as}\ k\to\infty.
\]
Similarly, we can show, all the terms appearing in the right hand side in \eqref{larg} will vanish as $k\to \infty$. Hence from \eqref{larg}, we have that
\[
\lim_{k\to\infty} \int_{B} e^{ix\cdot(\rho+k\theta)} (Q_1-Q_2)= \lim_{k\to\infty} \int_{B} e^{2ix\cdot m} (Q_1-Q_2)=0. 
\]
Since $m$ is arbitrary, we conclude
\[
\int_{\R^3} e^{2ix\cdot m} (Q_1-Q_2)=0, \quad \forall \, m\in\R^3. 
\]
As $m\in\mathbb{R}^3$ was arbitrary, we conclude that $Q_1 = Q_2$.
\end{proof}

\textbf{Acknowledgment:} The authors were funded by the Department of Atomic Energy $($DAE$)$, Government of India. 

The authors sincerely express gratitude to Prof. Tuhin Ghosh (Harish-Chandra Research Institute, Prayagraj, Uttar Pradesh, India) for his valuable insights and feedback, which significantly contributed to improving this article.

\vspace{.3cm}

	\textbf{Data Availability:} Data sharing is not applicable to this article as no datasets were generated or analyzed during the current study.

{\small 
\bibliographystyle{alpha}
\bibliography{ref}}

\end{document}